\newtheorem{theorem}{Theorem}
\newtheorem{proposition}{Proposition}
\newtheorem{lemma}[proposition]{Lemma}
\newtheorem{corollary}[proposition]{Corollary}
\theoremstyle{definition}
\newtheorem{definition}[proposition]{Definition}
\newtheorem{remark}[proposition]{Remark}
\newcommand{\R}{{\mathbb R}}
\newcommand{\N}{{\mathbb N}}
\newcommand{\Z}{{\mathbb Z}}
\newcommand{\Q}{{\mathbb Q}}
\newcommand{\expa}[1]{ \text{Fr}(#1)}
\newcommand{\+}[1]{{\mathcal {#1}}}
\newcommand{\card}{\#}
\newcommand{\measure}[1]{\lambda( #1 )}
\begin{document}
\title{Randomness and uniform distribution modulo one}
\author{Ver\'onica Becher  \and Serge Grigorieff }
\date{\today} 
\maketitle

\vspace*{-1cm}
{\footnotesize \tableofcontents}

\begin{abstract}
We elaborate the notions of Martin-L\"{o}f  and Schnorr randomness for real numbers
in terms of uniform distribution of sequences.  
We give a  necessary condition for a real number to be Schnorr random 
expressed in terms of classical uniform distribution of sequences. 
This extends  the result  proved by Avigad for sequences of 
 linear functions with integer coefficients
to the wider classical class of Koksma sequences of functions.
And, by requiring equidistribution with respect 
to every computably enumerable open set 
(respectively, computably enumerable open set with   computable measure)
in the unit interval, 
we give a sufficient condition for Martin-L\"of (respectively  Schnorr) randomness. 
\end{abstract}

\noindent
{\bf Keywords: } uniform distribution modulo 1, Martin-L\"{o}f randomness, random real number, 
Koksma General Metric Theorem.

%%%%%%%%%%%%%%%%%%%%%%%%%%%%%%%%
%%%%%%%%%%%%%%%%%%%%%%%%%%%%%%%%
\section{From randomness to uniform distribution}
%%%%%%%%%%%%%%%%%%%%%%%%%%%%%%%%
%%%%%%%%%%%%%%%%%%%%%%%%%%%%%%%%
How is the notion of randomness from algorithmic information theory 
related to the notion of uniform distribution from number theory?
In this paper we elaborate the definitions 
of Martin-L\"of randomness and Schnorr randomness for real numbers 
in  terms of uniform distribution of sequences.  

%%%%%%%%%%%%%%%%%%%%%%%%%%%%%%%%
\subsection{Randomness}
%%%%%%%%%%%%%%%%%%%%%%%%%%%%%%%%

Intuitively,  given a probability measure  such as Lebesgue measure, 
a real number  is random if  it has the properties of almost all real numbers; 
that is, if  it belongs to no set of  measure zero.
The intuition can not be taken literally
because every real number  belongs to the singleton set containing it,
which has  measure zero. To prevent the property being trivial
one can restrict to  computably defined sets,
as done by Martin-L\"of in~\cite{MartinLof1966}
and Schnorr in~\cite{Schnorr1971,DowneyGriffiths2004}.

A Martin-L\"of test for randomness 
is a sequence $(U_{n})_{n\in\N}$
of decreasing open sets $U_{n} = \bigcup_{k\in\N}I_{n,k}$
where $(I_{n,k})_{n,k\in\N}$ is a computable sequence 
of open intervals with rational endpoints
and such that the sequence of Lebesgue measures $(\measure{U_{n}})_{n\in\N}$ is 
upper-bounded by a computable function which converges to zero.
A real number passes this test if it is not in all $U_{n}$'s.
A real number is Martin-L\"of random if it passes every such test, i.e. if it is outside 
every one of these particular measure zero sets $\bigcap_{n\in\N}U_{n}$.
To get Schnorr randomness, consider 
those Martin-L\"of tests such that the sequence of reals $(\measure{U_{n}})_{n\in\N}$ is computable.
Martin-L\"of random reals constitute a proper subset of Schnorr random reals.
Since there are only countably many of these tests, 
there are also only countably many of these measure zero sets 
and this implies that almost all  real numbers, 
with respect to Lebesgue measure, are Martin-L\"of random 
and, a fortiori, Schnorr random.

The  definition entails that the fractional expansions 
of Martin-L\"of random  (or Schnorr random) real numbers 
obey all the usual probability laws.
And it follows that all the computable real numbers, 
such as the irrational algebraic numbers and  the usual mathematical constants 
including~$\pi$ and~$e$, are not Martin-L\"of random nor Schnorr random.
Chaitin~\cite{chaitin1975} showed that the halting probability of any
universal Turing machine with prefix-free domain 
is Martin-L\"of random.  
Many machine behaviours have Martin-L\"of random probabilities, 
for instance see~\cite{BDC2001,BG2005,BG2009,BFGM2006,Barmpalias2017,Ba17}.
For a  presentation of the theory of randomness see~\cite{DowneyHirschfeldt2010,Nies2009}.

%%%%%%%%%%%%%%%%%%%%%%%%%%%%%%%%
\subsection{Uniform distribution modulo $1$}\label{ss:ud}
%%%%%%%%%%%%%%%%%%%%%%%%%%%%%%%%
An infinite sequence $(x_n)_{n\geq 1}$ of real numbers is uniformly distributed
modulo $1$, abbreviated u.d. mod~$1$,  
if  the sequence formed by the fractional 
parts of its terms is equidistributed in the unit interval.
This means that for each subinterval of the unit interval, 
asymptotically, the proportion of terms falling within that subinterval 
is equal to its length.   
For a real number~$x$, we write $\lfloor x\rfloor$ to denote its integer part and 
$\{x\}=x- \lfloor x\rfloor$ for its fractional expansion.
%\pagebreak

Thus, a  sequence  $(x_n)_{n\geq 1}$ of real numbers is u.d. mod~$1$  if for 
every half open interval $[a,b)$ in $[0,1]$,

\begin{align}\label{eq xn ud}
\lim_{N\to\infty} 
\frac{1}{N} \#\big\{ n : 1\leq n \leq N  \text{ and } \{x_n\}\in [a,b)\big\} &= b-a.
\end{align}
A  presentation of  the theory of uniform distribution  can be read 
from~\cite{KuipersNiederreiter2006, DrmotaTichy1997,Bugeaud2012}.

In the years 1909 and 1910,  Bohl, Sierpi\'nski  and Weyl independently established  
that a  number $x$ is irrational 
exactly when  the sequence $(n\,x )_{n\geq 1}$ 
is u.d.~mod~$1$
(cf.~\cite[p.8 Example~2.1 and p.21 Notes]{KuipersNiederreiter2006}).
The subset of the irrational numbers
that Borel called  absolutely normal%
\footnote{A  real number is simply normal 
to an integer base $b$ greater than or equal to $2$ 
if in the fractional expansion of $x$ in  base $b$ 
every digit in $\{0,1,\ldots, b-1\}$ occurs with the same asymptotic frequency $1/b$.
A number is normal to base $b$ if it is simply normal to the bases $b, b^2, b^3, \ldots$
A number is absolutely normal if it is normal to all integer bases greater than or 
equal to~$2$.}~\cite{Borel1909} also has a characterization in terms of uniform distribution:
a real number is absolutely normal exactly when,
for every  integer $t$ greater than~$1$,
the sequence $(t^n x)_{n\geq 1}$ is u.d. mod~$1$,
see~\cite{KuipersNiederreiter2006}[p.70 Theorem 8.1 and p.74 Notes] and~\cite{Bugeaud2012}.

The Martin-L\"of random and Schnorr random real numbers are a proper subset of
 the  irrational numbers and also of the absolutely normal numbers,
we aim to find a  class of  sequences of functions $(u_n:[0,1]\to \mathbb R)_{n\geq 1}$
such that $x$ is Martin-L\"of random exactly when  $(u_{n}(x))_{n\geq 1}$ is u.d. mod 1.

The following  observation gives a first delimitation of what is possible
in terms of linear functions.

\begin{proposition}\label{p Avigad outside comput}
For every real $x$ there exists a strictly monotone increasing sequence 
$(a_n)_{n\geq1}$ of positive integers such that 
the sequence $(2^{a_n} x)_{n\geq1}$ is not u.d.
Moreover, if $x$ is computable then so is the sequence $(a_n)_{n\geq1}$.
\end{proposition}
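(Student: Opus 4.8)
The plan is to exploit the fact that multiplication by $2$ acts as the shift on binary expansions: if $x=\lfloor x\rfloor+\sum_{k\geq1}d_k2^{-k}$ with $d_k\in\{0,1\}$, then $\{2^nx\}=\sum_{k\geq1}d_{n+k}2^{-k}$, so $\{2^nx\}\in[0,1/2)$ exactly when $d_{n+1}=0$. I would take for $(a_n)_{n\geq1}$, listed in increasing order, the indices $n$ with $\{2^nx\}<1/2$; this forces every term $\{2^{a_n}x\}$ into the left half $[0,1/2)$, which makes \eqref{eq xn ud} fail for the interval $[a,b)=[1/2,1)$.

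The only point needing an argument is that the set $A:=\{\,n\geq1:\{2^nx\}<1/2\,\}$ is infinite. I would prove this directly from the recurrence $y_{n+1}=\{2y_n\}$ for $y_n:=\{2^nx\}$. Suppose $A$ were finite; then there is $N$ with $y_n\in[1/2,1)$ for all $n\geq N$, hence $y_{n+1}=2y_n-1$ for $n\geq N$, and the numbers $w_n:=1-y_n\in(0,1/2]$ satisfy $w_{n+1}=2w_n$; therefore $w_{N+j}=2^jw_N\to\infty$ while $w_N>0$, contradicting $w_n\leq1/2$. (In digit terms: a real whose binary expansion has only finitely many zeros is a dyadic rational, and for a dyadic rational $\{2^nx\}=0$ for all large $n$, so $A$ is then cofinite.) Note that the recurrence argument is uniform: it treats dyadic and non-dyadic $x$ on the same footing.

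Now let $(a_n)_{n\geq1}$ enumerate $A$ in increasing order; it is a strictly increasing sequence of positive integers, and $\{2^{a_n}x\}\in[0,1/2)$ for every $n$. Hence $\#\{\,n:1\leq n\leq N\text{ and }\{2^{a_n}x\}\in[1/2,1)\,\}=0$ for every $N$, so the limit in \eqref{eq xn ud} equals $0$, not $b-a=1/2$; thus $(2^{a_n}x)_{n\geq1}$ is not u.d. mod~$1$.

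For the computability clause, observe that $n\in A$ if and only if $\lfloor2^{n+1}x\rfloor$ is even. If $x=p/2^k$ is a dyadic rational, it suffices to take $a_n:=k+n$, which is visibly computable and satisfies $\{2^{a_n}x\}=0$. If $x$ is computable and not a dyadic rational, then $2^{n+1}x\notin\Z$ for every $n$, so for each fixed $n$ the real $2^{n+1}x$ is bounded away from $\Z$; approximating $x$ with sufficient precision then pins down $\lfloor2^{n+1}x\rfloor$, hence decides membership of $n$ in $A$. So $A$ is a computable set and its increasing enumeration $(a_n)_{n\geq1}$ is a computable sequence. I expect this last boundary issue---deciding $n\in A$ when $2^{n+1}x$ might be an integer---to be the only delicate point, and isolating the dyadic case disposes of it; everything else is elementary.
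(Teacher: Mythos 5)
Your proof is correct and takes essentially the same approach as the paper: both select the exponents $n$ for which the fractional part of $2^n x$ lies in $[0,1/2)$ (equivalently, the positions of zero binary digits), observe there are infinitely many such $n$, and conclude that equidistribution fails for the interval $[1/2,1)$. Your doubling-recurrence argument for infinitude and your unified treatment of the dyadic case are only cosmetic variants of the paper's digit-based case split.
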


\begin{proof}
If $x$ is dyadic, says $x=p/2^k$ with $p\in\N$ 
then the fractional part of $2^\ell x$ is $0$ for every $\ell\geq k$,
hence the left-hand side of equality~\eqref{eq xn ud}
is at most $k/N$ if $0<a<b$ and a fortiori smaller than $b-a$ for $N>k/(b-a)$. 
In particular, the sequence $(2^\ell x)_{\ell\geq k}$ is not u.d.

If $x$ is not dyadic, let the fractional part of $x$ be $\sum_{n\geq1} \delta_n/2^n$ 
where the $\delta_n$'s are the binary digits of $x$. Infinitely many 
of these digits are $0$ and infinitely many of them are $1$. 
Let $(a_k)_{k\geq1}$ be the sequence of ranks (in $\N\setminus\{0\}$)
of those which are $0$, i.e. $\delta_{a_k}=0$ for all $k\geq1$
and $\delta_n=1$ for all $n$ different from all the $a_k$'s.
Observe that the first digit of $2^{n-1}x$ is $\delta_n$. 
Thus, the fractional part of ${2^{a_k-1}x}$ is in $[0,1/2)$ for all $k\geq1$
hence the left-hand side of equality~\eqref{eq xn ud} is $1$ for all $N$ and does not tend to $1/2$.
This shows that the sequence $(2^{a_k-1}x)_{k\geq1}$ is not u.d.
Of course, if $x$ is computable so is the sequence  $(a_k)_{k\geq1}$.
\end{proof}

%%%%%%%%%%%%%%%%%%%%%%%%%%%%%%%%
\subsection{Koksma's General Metric theorem}% and uniform distribution}
%%%%%%%%%%%%%%%%%%%%%%%%%%%%%%%%

We consider the class of sequences  in Koksma's General Metric theorem~\cite{Koksma1935}
but restricted to a suitable computability condition.
Koksma's General Metric Theorem, which can also be read  
from~\cite[Theorem~4.3 Chapter~1, p.34]{KuipersNiederreiter2006},
introduces the  class of so-called Koksma sequences of functions 
and establishes that if $(u_n)_{n\geq 1}$ is Koksma
then, for almost all real numbers $x\in [0,1]$, 
the sequence $(u_n(x))_{n\geq 1}$ is u.d. mod~$1$.

\begin{definition}[Koksma sequences of functions]\label{def:Koksma}
Let $K>0$.
A sequence $(u_n)_{n\geq 1}$ of functions $u_n: [0,1]\to\R$ is $K$-Koksma if
\begin{enumerate}
\item[-]
the function $u_n$ is continuously differentiable for every $n$,
\item[-] the difference $u'_m(x)-u'_n(x)$ is a monotone function on 
$[0,1]$ for all $m,n$,
\item[-] $|u'_m(x)-u'_n(x)|\geq K$ for all $m\neq n$ and all $x\in[0,1]$.
\end{enumerate}
A sequence $(u_n)_{n\geq 1}$ is Koksma if it is $K$-Koksma for some $K$.
\end{definition}
For example, if $t>1$ and $u_n(x)=t^n x$
then $(u_n)_{n\geq 0}$ is $(t-1)$-Koksma.
Similarly  if  $(a_n)_{n\geq 1}$ is a sequence of pairwise distinct integers 
and   $u_n(x)=a_n x$  then  $(u_n)_{n\geq 0}$  is $1$-Koksma.
In contrast, for $u_n(x)=x^n$, $(u_n)_{n\geq 0}$  is not Koksma
and $u_n(x)=x+ a$, for some $a$ is not Koksma either.
The importance of the Koksma class is stressed by the following classical result.

\begin{proposition}[Koksma's General Metric theorem, 1935]\label{p Koksma metric}
If the sequence of functions $(u_n)_{n\geq1}$ is Koksma then the set of reals $x\in[0,1]$ such that the sequence of reals $(u_n(x))_{n\geq1}$ is u.d. mod 1 has Lebesgue measure 1.
\end{proposition}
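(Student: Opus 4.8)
The plan is to combine \emph{Weyl's criterion} with an $L^2$ estimate on exponential sums obtained from \emph{van der Corput's first-derivative test}, followed by a Borel--Cantelli-type argument along a sparse subsequence. Recall that a sequence $(y_n)_{n\geq1}$ of reals is u.d.\ mod~$1$ if and only if for every integer $h\neq0$ one has $\frac1N\sum_{n=1}^N e^{2\pi i h y_n}\to0$ as $N\to\infty$. Since there are only countably many such~$h$, it suffices to prove that for each fixed $h\neq0$, writing $S_N^h(x):=\sum_{n=1}^N e^{2\pi i h u_n(x)}$, the set $B_h=\{x\in[0,1] : S_N^h(x)/N\not\to 0\}$ is Lebesgue null; then $[0,1]\setminus\bigcup_{h\neq0}B_h$ has measure~$1$ and every $x$ in it makes $(u_n(x))_{n\geq1}$ u.d.\ mod~$1$.

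Fix $h\neq0$. First I would strengthen the Koksma separation condition. Because $u'_m-u'_n$ is continuous (each $u_n$ is $C^1$) and never vanishes (it has absolute value $\geq K>0$), it has constant sign on $[0,1]$; hence for every pair $m\neq n$ the sign of $u'_m(x)-u'_n(x)$ is independent of~$x$. So for each $N$ there is a permutation $\pi$ of $\{1,\dots,N\}$ with $u'_{\pi(1)}(x)<\dots<u'_{\pi(N)}(x)$ for all $x\in[0,1]$, and since each consecutive gap is $\geq K$ this yields $|u'_{\pi(a)}(x)-u'_{\pi(b)}(x)|\geq K\,|a-b|$ for all $x$ and all $a,b$. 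Applying van der Corput's lemma to $f=h\,(u_m-u_n)$ — whose derivative $h\,(u'_m-u'_n)$ is monotone (Koksma's monotonicity condition, times a constant) and, when $m=\pi(a)$ and $n=\pi(b)$, satisfies $|f'|\geq|h|K|a-b|$ — gives $\bigl|\int_0^1 e^{2\pi i h(u_m(x)-u_n(x))}\,dx\bigr|\leq\min\bigl(1,\,\tfrac{C}{|h|K|a-b|}\bigr)$ for an absolute constant~$C$.

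Expanding the square and isolating the diagonal (which contributes $N$),
\begin{align*}
\int_0^1|S_N^h(x)|^2\,dx
&=\sum_{m,n=1}^N\int_0^1 e^{2\pi i h(u_m(x)-u_n(x))}\,dx\\
&\leq N+\frac{2C}{|h|K}\sum_{d=1}^{N-1}\frac{N-d}{d}\;\leq\; c_h\,N\log N
\end{align*}
for $N\geq2$, where $c_h$ depends only on $h$ and $K$. In particular, along the squares $N=q^2$ we get $\int_0^1|S_{q^2}^h(x)/q^2|^2\,dx\leq c_h\,(2\log q)/q^2$, and $\sum_{q\geq2}(\log q)/q^2<\infty$; by monotone convergence $\sum_q|S_{q^2}^h(x)/q^2|^2<\infty$ for almost every~$x$, so $S_{q^2}^h(x)/q^2\to0$ a.e. To fill the gaps, note that for $q^2\leq N<(q+1)^2$ one has $|S_N^h(x)-S_{q^2}^h(x)|\leq N-q^2\leq 2q$, whence $|S_N^h(x)/N|\leq|S_{q^2}^h(x)|/q^2+2/q\to0$ a.e. This gives $\measure{B_h}=0$ and, by the reduction above, the proposition.

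Everything but the second paragraph is routine: Weyl's criterion, the van der Corput estimate, and the Fubini/Beppo Levi step are classical, and the squares-then-interpolate device for upgrading $L^2$ decay to a.e.\ convergence is standard. I expect the real content — and the only genuine obstacle — to be the second paragraph: the observation that the Koksma hypotheses force the rank-proportional separation $|u'_{\pi(a)}-u'_{\pi(b)}|\geq K|a-b|$, which is exactly what turns the \emph{a priori} $O(N^2)$ bound for $\int_0^1|S_N^h|^2$ into $O(N\log N)$. Without it the $L^2$ estimate is useless; with it the rest is bookkeeping, the main care being to keep all constants uniform in $N$ and to check the hypotheses of van der Corput's lemma (monotonicity of $f'$ is immediate, and the required lower bound on $|f'|$ is precisely the strengthened separation).
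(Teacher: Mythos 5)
Your proof is correct, and it is essentially the classical argument behind the result: the paper does not prove this proposition itself but cites it from Kuipers--Niederreiter (Theorem~4.3, Chapter~1), whose proof is exactly your combination of the rank-proportional separation $|u'_{\pi(a)}-u'_{\pi(b)}|\geq K|a-b|$, van der Corput's first-derivative estimate, the resulting $O(N\log N)$ bound on $\int_0^1|S_N^h|^2$, and the squares-then-interpolate Borel--Cantelli step. Indeed, the paper extracts precisely your $L^2$ inequality as its Lemma~\ref{l bound int SN2} and your $N=q^2$ interpolation device as its Lemma~\ref{l Weyl variant}, so your write-up matches the proof the paper relies on.
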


%%%%%%%%%%%%%%%%%%%%%%%%%%%%%%%%
\subsection{Effective Koksma uniformly distributed reals}
%%%%%%%%%%%%%%%%%%%%%%%%%%%%%%%%
The theory of computability formalizes the notion of function, or sequence of functions $\N\to\N$ or $[0,1]\to\R$ which can be effectively computed by some algorithm.
Using it, we can define  {\em effective Koksma sequences}
and {\em effective Koksma uniformly distributed reals}.

\begin{definition}[Effective Koksma functions]
A sequence $(u_n)_{n\geq 1}$ of functions is effective Koksma 
if it is Koksma and computable and if
$(u'_n)_{n\geq 1}$ is also computable.
\end{definition}

\begin{definition}[Effective Koksma uniformly distributed reals]
A real $x\in[0,1]$ is effective Koksma uniformly distributed (in short effective Koksma u.d.),
if, for every effective Koksma sequence $(u_n)_{n\geq 1}$, 
the sequence of reals $(u_n(x))_{n\geq 1}$ is u.d. mod 1.
\end{definition}

Proposition~\ref{p Koksma metric} immediately implies the following result.

\begin{proposition}\label{p Koksma ud ae}
The set of effective Koksma u.d. reals $x\in[0,1]$ has Lebesgue measure~1.
\end{proposition}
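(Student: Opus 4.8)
The plan is to derive Proposition~\ref{p Koksma ud ae} from Koksma's General Metric theorem (Proposition~\ref{p Koksma metric}) together with the observation that there are only countably many effective Koksma sequences, and then invoke countable subadditivity of Lebesgue measure. So the first thing I would do is make the countability claim precise. An effective Koksma sequence $(u_n)_{n\geq1}$ is by definition computable and has a computable derivative sequence $(u'_n)_{n\geq1}$; in any of the standard frameworks of computable analysis one may fix, this means that both $(u_n)_{n\geq1}$ and $(u'_n)_{n\geq1}$ are produced by algorithms, and each algorithm is coded by a natural number. Hence the assignment sending an effective Koksma sequence to a pair of indices for its defining algorithms is an injection of the class of effective Koksma sequences into a countable set, so that class is at most countable. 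I would enumerate it as $\big((u^{(j)}_n)_{n\geq1}\big)_{j\in J}$ with $J\subseteq\N$.

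Next, for each $j\in J$ set $B_j=\{\, x\in[0,1] : (u^{(j)}_n(x))_{n\geq1}\text{ is not u.d. mod }1 \,\}$. Since $(u^{(j)}_n)_{n\geq1}$ is in particular Koksma, Proposition~\ref{p Koksma metric} yields $\measure{B_j}=0$. By the definition of effective Koksma u.d. real, a point $x\in[0,1]$ fails to be effective Koksma u.d. precisely when $x\in B_j$ for some $j\in J$; that is, the set of non-effective-Koksma-u.d. reals in $[0,1]$ equals $\bigcup_{j\in J}B_j$. A countable union of Lebesgue-null sets is null, so $\measure{\bigcup_{j\in J}B_j}=0$, and therefore the set of effective Koksma u.d. reals, being the complement of this set in $[0,1]$, has Lebesgue measure~$1$.

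The only genuinely delicate point is the countability of the class of effective Koksma sequences, which rests on having fixed, once and for all, a representation of sequences of functions $[0,1]\to\R$ and the associated notion of computability (so that "computable" really does mean "given by an algorithm coded by a natural number"). Once that is in place, everything else is routine: the rest of the argument is an immediate appeal to Proposition~\ref{p Koksma metric} applied to each $j\in J$ and to countable subadditivity of $\lambda$.
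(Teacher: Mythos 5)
Your proof is correct and is essentially the argument the paper intends: the paper simply states that Proposition~\ref{p Koksma metric} ``immediately implies'' the result, and the implicit reasoning is exactly your combination of the countability of the class of effective Koksma sequences (via indices of the defining algorithms) with the null set from Koksma's theorem for each such sequence and countable subadditivity. You have merely made explicit what the paper leaves to the reader.
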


%%%%%%%%%%%%%%%%%%%%%%%%%%%%%%%%
\subsection{Theorem~\ref{thm:1}:  Randomness implies effective Koksma u.d.}
%%%%%%%%%%%%%%%%%%%%%%%%%%%%%%%%
Proposition~\ref{p Koksma ud ae} 
ensures that almost every Schnorr 
random  real is effective Koksma u.d.
In fact, almost everywhere can be replaced by everywhere. 

\begin{theorem}\label{thm:1}
Every Schnorr random real (a fortiori every Martin-L\"of random real) is effective Koksma u.d.
\end{theorem}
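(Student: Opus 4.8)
The plan is to pass from the combinatorial statement ``$(u_n(x))_{n\ge1}$ is u.d.\ mod~$1$'' to an analytic one via Weyl's criterion, and then to deduce the analytic statement from an \emph{effective, quantitative} version of the proof of Koksma's General Metric Theorem. Fix an effective Koksma sequence $(u_n)_{n\ge1}$, say $K$-Koksma, and a Schnorr random $x\in[0,1]$. By Weyl's criterion it suffices to prove that for every integer $h\neq0$ the Weyl average $S^h_N(x)=\tfrac1N\sum_{n=1}^N e^{2\pi i h\,u_n(x)}$ tends to $0$ as $N\to\infty$; since $(u_n)$ is computable, $x\mapsto S^h_N(x)$ is, for each fixed $h$ and $N$, a computable function. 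The strategy is to cover, for each $h$, the ``bad'' set $B_h=\{x:S^h_N(x)\not\to0\}$ by a Schnorr test, and then to amalgamate these tests over $h$; a Schnorr random real then avoids all of them, and Weyl's criterion finishes the argument. Since every Martin-L\"of random real is Schnorr random, the parenthetical claim follows at once.

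The first and main step is to make the mean-square estimate behind Koksma's theorem explicit. Writing $\int_0^1|S^h_N|^2=\tfrac1{N^2}\sum_{m,n\le N}\int_0^1 e^{2\pi i h(u_m-u_n)}$, the diagonal contributes $1/N$, while for $m\neq n$ van der Corput's lemma applies to $\phi=h(u_m-u_n)$ — its derivative $h(u'_m-u'_n)$ is monotone on $[0,1]$ and has absolute value at least $|h|K$ — and bounds each off-diagonal integral by $C/(|h|K)$, indeed by a quantity computable from $h$ and the values $u'_m(0),u'_m(1),u'_n(0),u'_n(1)$. Tracking the constants through the classical proof (using that $u'_1(x),\dots,u'_N(x)$ are $K$-separated, so that the relevant sums of reciprocals are $O((\log N)/K)$) one obtains an explicit computable bound $\int_0^1|S^h_N|^2\le\psi(N,h,K)$ with $\psi(N,h,K)\to0$ in $N$ and $\sum_{\nu\ge1}\psi(\nu^2,h,K)<\infty$. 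Carrying out this bookkeeping precisely — pinning down the exact shape of $\psi$ and its dependence on $h$ and $K$ — is the hard part of the whole argument.

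Given the estimate, I would evaluate at the squares $N=\nu^2$. By Chebyshev's inequality the c.e.\ open set $\{x:|S^h_{\nu^2}(x)|>1/j\}$ has measure $\le j^2\psi(\nu^2,h,K)$. Since $|S^h_N(x)|\le|S^h_{\nu^2}(x)|+(2\nu+1)/\nu^2$ for $\nu^2\le N<(\nu+1)^2$, one has $S^h_N(x)\to0$ as soon as $S^h_{\nu^2}(x)\to0$, hence $B_h\subseteq\bigcup_{j\ge1}\bigcap_{m\ge1}\bigcup_{\nu\ge m}\{x:|S^h_{\nu^2}(x)|>1/j\}$. For fixed $j$ the sets $U^{h,j}_m=\bigcup_{\nu\ge m}\{x:|S^h_{\nu^2}(x)|>1/j\}$ are c.e.\ open, decreasing in $m$, with measure controlled by the computable tail $j^2\sum_{\nu\ge m}\psi(\nu^2,h,K)\to0$; re-indexing so that these bounds drop below $2^{-k}$ and amalgamating over $j$ produces a Schnorr test whose intersection contains $B_h$, and a final weighting over $h\in\Z\setminus\{0\}$ (replace the $k$-th level of the $h$-test by its $(k+|h|+1)$-st level, so the measures sum to $\le2^{-k}$) gives a single Schnorr test containing $\bigcup_h B_h$. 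For Martin-L\"of randomness this is easier — a Solovay-type cover by basic intervals of finite total length already works, with no computability demand on the measures — so the Schnorr case is the delicate one: the point needing care is that the measures of the test levels be genuinely computable, uniformly in $h,j,k$, and converge effectively to $0$, which is precisely what the explicit, computable form of $\psi$ is there to supply.
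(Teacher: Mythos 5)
Your Martin-L\"of half is essentially the paper's argument: Weyl's criterion along the squares $N=\nu^2$, the quantitative mean-square bound extracted from the proof of Koksma's General Metric Theorem (the paper simply cites the last inequality of the Kuipers--Niederreiter proof, getting $\int_0^1|S_N(h\mathbf{u})|^2<\bigl(1+\tfrac{17}{|h|K}\bigr)\tfrac{\ln N}{N}$, rather than re-deriving it via van der Corput), Chebyshev, and a summable cover --- the paper packages this as a Solovay test for a single ``bad'' $h$ obtained by contradiction, instead of amalgamating over all $h$, but that is cosmetic.

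The Schnorr half, which is the actual content of the theorem, has a genuine gap. You write that the measures of your test levels are ``controlled by the computable tail $j^2\sum_{\nu\ge m}\psi(\nu^2,h,K)$'' and that the explicit form of $\psi$ is what makes them computable. It does not: $\psi$ gives a computable \emph{upper bound} on $\lambda\{x:|S^h_{\nu^2}(x)|>1/j\}$, which is exactly what a Martin-L\"of/Solovay test needs, but a Schnorr (or total Solovay) test requires the measures themselves to be computable reals. The set $\{x:|S^h_{\nu^2}(x)|>1/j\}$ is c.e.\ open, so its measure is only left-c.e.\ in general --- the paper explicitly flags this (``the left cut of $\sum_k\lambda(A_k^\varepsilon)$ is computably enumerable but not necessarily computable''). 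The missing idea is a discretization: using the computability of the derivatives $u'_n$ (the one hypothesis of ``effective Koksma'' your proof never touches, which is a telltale sign), one computes a bound $p(k)$ on the Lipschitz constant of $t\mapsto S_{k^2,h}(\mathbf{u}(t))$, samples this function at dyadic points of mesh $<\varepsilon/8p(k)$, and builds finite unions of rational intervals $B_k$ with $A_k^{\varepsilon}\subseteq B_k\subseteq A_k^{\varepsilon/2}$. Each $\lambda(B_k)$ is then an exactly computable rational, the tail of $\sum_k\lambda(B_k)$ is still controlled by $\psi$ (applied at threshold $\varepsilon/2$), so the sum is a computable real and $(B_k)_k$ is a total Solovay test failed by $x$. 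Without this step your construction only refutes Martin-L\"of randomness, not Schnorr randomness.
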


This theorem extends Avigad's~\cite[Theorem 2.1]{Avigad2013}, see also~\cite{fp2020} and the references there. Indeed, Avigad considers ``linear’’ Koksma sequences of functions, namely sequences of linear functions $(x\mapsto a_n x)_{n\geq1}$ where 
$(a_n)_{n\geq 1}$ is any computable sequence of pairwise distinct positive integers
(an assumption which trivially insures the effective Koksma condition). 
He proves that, for every Schnorr random real $x$, the sequence of 
reals $(a_n x)_{n\geq 1}$ is u.d. mod 1.

The proof of Theorem~\ref{thm:1} is done in Section~\ref{s:thm1}. 
It uses Koksma's General Metric Theorem. 

\begin{remark}
By Proposition~\ref{p Avigad outside comput} no computable real can be effective Koksma u.d.
Since there are Martin-L\"of random reals which are computable in $\emptyset'$, 
Theorem~\ref{thm:1} ensures that there 
exist effective Koksma u.d. reals which are computable in~$\emptyset'$.
\end{remark}

%%%%%%%%%%%%%%%%%%%%%%%%%%%%%%%%
\subsection{Around Theorem~\ref{thm:1}}
%%%%%%%%%%%%%%%%%%%%%%%%%%%%%%%%
We do  not know whether the converse fails or not,
 i.e. whether there exists an effective Koksma u.d. real which is not Schnorr random.
For linear functions with coefficients in $\N$, Avigad~\cite{Avigad2013} proves 
the following result.

First,  consider the classical notion of $\Sigma^0_1$ sets.

\begin{definition}\label{def:Sigma01}
A subset $S$ of $[0,1]$ is effectively open, or $\Sigma^0_1$, if it is of the form
$S=\bigcup_{k\geq 1} I_k$
for some computable sequence $(I_k)_{k\geq 1}$
of (possibly empty) open intervals $I_k$ with rational endpoints.
Complements of such sets are called effectively closed, or $\Pi^0_1$.
Schnorr $\Sigma^0_1$ sets are those $\Sigma^0_1$ sets 
with computable Lebesgue measure.
\end{definition}

\begin{proposition}[Avigad~\cite{Avigad2013}]\label{p Avigad ud not random}
There is an uncountable perfect $\Pi_{0}^{1}$ set $C\subset[0,1]$ of Lebesgue measure $0$
-- hence disjoint from the set of Schnorr random or simply Kurtz random reals -- 
and an atomless probability measure $\nu$ on $C$ such that,
for every sequence $(a_{n})_{n\geq1}$ of pairwise distinct positive integers,
the set $\{x\in C \colon \text{ $(a_{n}x)_{n\geq1}$ is u.d.}\}$ has $\nu$ measure 1.
In particular, there are reals which are not Schnorr random nor Kurtz random and such that $(a_{n}x)_{n\geq1}$ 
is u.d. for every computable sequence $(a_{n})_{n\geq1}$ of pairwise distinct positive integers.
\end{proposition}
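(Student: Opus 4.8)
The plan is to realize $C$ and $\nu$ so that $\nu$ is a Rajchman measure \emph{with a rate}: writing $\widehat\nu(\ell)=\int_C e^{2\pi i\ell x}\,d\nu(x)$ for $\ell\in\Z$, we ask that $|\widehat\nu(\ell)|=O(|\ell|^{-\eta})$ as $|\ell|\to\infty$, for some fixed $\eta>0$ (a poly-logarithmic rate $|\widehat\nu(\ell)|=O((\log|\ell|)^{-\beta})$ with $\beta>1$ would also do). Perfect compact subsets of $[0,1]$ of Lebesgue measure zero carrying a probability measure with power Fourier decay exist (Salem-type constructions), and running such a construction with computable parameters makes $C$ effectively closed, i.e.\ $\Pi^0_1$, while $\nu$ is an atomless Borel probability measure with support $C$. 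With $C$ in hand the stated geometric and logical properties are immediate: $C$ is homeomorphic to Cantor space, hence perfect and uncountable; and since a Kurtz random real lies outside every $\Pi^0_1$ Lebesgue-null set, $C$ is disjoint from the Kurtz random reals, a fortiori from the Schnorr and Martin-L\"of random reals. (If one insists instead on Avigad's elementary choice -- $C$ the set of reals whose binary digits vanish outside a very sparse set of positions, $\nu$ the fair-coin measure on the free positions -- then $\nu$ fails to be Rajchman and the estimate below becomes more delicate; see the end.)

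Fix a sequence $(a_n)_{n\ge1}$ of pairwise distinct positive integers. By Weyl's criterion, $(a_nx)_{n\ge1}$ is u.d.\ mod~$1$ exactly when, for every integer $h\ne0$, the exponential sum $S_N(x):=\frac1N\sum_{n=1}^N e^{2\pi i h a_n x}$ tends to $0$ as $N\to\infty$. Hence it is enough to show that for each fixed $h\ne0$ we have $S_N(x)\to0$ for $\nu$-almost every $x$; intersecting the countably many resulting conull sets over $h$ then yields $\nu\big(\{x\in C:(a_nx)_{n\ge1}\text{ is u.d.}\}\big)=1$. The fact that there are uncountably many sequences $(a_n)$ causes no problem, since the claim is only that \emph{for each fixed} $(a_n)$ the corresponding set is $\nu$-conull.

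The crux is a second-moment estimate. Expanding the square, $\sigma_N:=\int_C|S_N|^2\,d\nu=\frac1{N^2}\sum_{m,n=1}^N\widehat\nu\big(h(a_m-a_n)\big)$; the $N$ diagonal terms give $1/N$. The off-diagonal terms are bounded by combining two observations: (i) by the decay rate, $D_\varepsilon:=\{\ell\in\Z:|\widehat\nu(\ell)|>\varepsilon\}$ is finite with $\#D_\varepsilon=O(\varepsilon^{-1/\eta})$; and (ii) since the $a_n$ are pairwise distinct, for each fixed nonzero integer $d$ there are at most $N$ ordered pairs $(m,n)$ with $m,n\le N$ and $h(a_m-a_n)=d$. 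Separating the off-diagonal pairs according to whether $h(a_m-a_n)\in D_\varepsilon$, one obtains $\sigma_N\le \frac1N+\varepsilon+\frac{\#D_\varepsilon}{N}$, and optimizing over $\varepsilon=\varepsilon(N)$ gives $\sigma_N=O(N^{-c})$ for some $c>0$ (the poly-logarithmic rate gives instead $\sigma_N=O((\log N)^{-\beta})$, still usable below). Finally, to upgrade fast decay of $\sigma_N$ to a.e.\ convergence of $S_N$: fix $\delta>0$, put $N_j=\lfloor(1+\delta)^j\rfloor$, note $\sum_j\sigma_{N_j}<\infty$, so by monotone convergence $\int_C\sum_j|S_{N_j}|^2\,d\nu<\infty$ and hence $S_{N_j}(x)\to0$ for $\nu$-a.e.\ $x$; for $N_j\le N<N_{j+1}$ we have $\big|S_N-\tfrac{N_j}{N}S_{N_j}\big|\le (N_{j+1}-N_j)/N_j$, which tends to $\delta$, so $\limsup_N|S_N|\le\delta$ $\nu$-a.e.; letting $\delta=1/k\to0$ and intersecting conull sets gives $S_N\to0$ $\nu$-a.e., completing the proof. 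The step I expect to be the main obstacle is the one hidden in the first paragraph: producing a $\Pi^0_1$, perfect, Lebesgue-null set that carries a measure with a genuine Fourier-decay rate. With Avigad's elementary non-Rajchman measure, observation (i) fails and must be replaced by a $2$-adic analysis of $\widehat\nu$ -- which vanishes unless the $2$-adic valuation of its argument lies in a prescribed sparse set -- combined with the distinctness of the $a_n$ to show that the surviving coefficients, weighted by pair multiplicities, still sum to $o(N^2)$; carrying that through is the delicate part of the elementary route.
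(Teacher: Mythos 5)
First, a point of comparison: the paper does not prove this proposition at all --- it is quoted from Avigad's article with a citation, and the only in-paper commentary is the remark immediately after it, observing that the non-effective content ``is related to the existence of a measure with positive Fourier dimension'' (the Ekstr\"om--Schmeling reference). Your proof is exactly that Fourier-dimension route, whereas Avigad's own argument uses a non-Rajchman measure (the coin-flipping measure on a very sparse set of binary digit positions) together with a bespoke second-moment computation adapted to it. The analytic core of what you wrote is correct and cleanly executed: the reduction via Weyl's criterion to a single nonzero frequency $h$; the identity $\int_C|S_N|^2\,d\nu=\frac{1}{N^2}\sum_{m,n}\widehat\nu\bigl(h(a_m-a_n)\bigr)$; the observation that distinctness of the $a_n$ gives at most $N$ ordered pairs per fixed nonzero difference, so the finitely many large Fourier coefficients contribute $O(\#D_\varepsilon/N)$; and the upgrade from summability of $\sigma_{N_j}$ along $N_j=\lfloor(1+\delta)^j\rfloor$ to $\nu$-a.e.\ convergence of $S_N$ (the same device as the paper's Lemma~\ref{l Weyl variant}). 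Atomlessness, perfectness, uncountability, and disjointness from the Kurtz (hence Schnorr) random reals all follow as you say, \emph{provided} $C$ is $\Pi^0_1$ and null and $\nu$ has the claimed decay.

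The genuine gap is the one you yourself flag: the existence of the pair $(C,\nu)$, with $C$ a $\Pi^0_1$, perfect, Lebesgue-null subset of $[0,1]$ and $\nu$ a probability measure on $C$ satisfying a quantitative Fourier decay bound, is asserted rather than established. ``Salem-type constructions with computable parameters'' is not a proof: Salem's and Kahane's constructions are probabilistic, so effectivizing them requires an argument; Kaufman's deterministic Salem sets are not closed; and for candidates that are manifestly $\Pi^0_1$ and null (e.g.\ Bernoulli-convolution supports with a computable non-Pisot contraction ratio below $1/2$), the Rajchman property is known but a usable \emph{rate} is a nontrivial theorem, not a routine remark. Since every subsequent step leans on $\#D_\varepsilon$ being polynomially (or at worst sub-exponentially) controlled in $1/\varepsilon$, the whole argument stands or falls with this construction. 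Your fallback --- redoing the estimate for Avigad's non-Rajchman measure via a $2$-adic analysis of $\widehat\nu$ --- is essentially the route Avigad actually takes, but you leave it entirely unexecuted; as written, the proposal is a correct reduction of the proposition to an unproved existence statement rather than a complete proof.
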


Independently of the effective character of this result, 
it is related to  the existence of a measure with positive Fourier dimension,
cf.~\cite{Ekstrom-Schmeling}[Corollary 4, p.73].
We do not know whether the techniques around Koksma's General Metric Theorem 
and the Fourier dimension theory can be pushed to extend
 Proposition~\ref{p Avigad ud not random} by replacing sequences of linear functions $(x\mapsto a_{n}x)_{n\geq1}$ 
by general Koksma sequences of functions,
which would ensure that the converse of Theorem~\ref{thm:1}~fails.

%%%%%%%%%%%%%%%%%%%%%%%%%%%%%%%%
%%%%%%%%%%%%%%%%%%%%%%%%%%%%%%%%
\section{From $\Sigma^0_1$-uniform distribution to  randomness }
%%%%%%%%%%%%%%%%%%%%%%%%%%%%%%%%
%%%%%%%%%%%%%%%%%%%%%%%%%%%%%%%%
To capture Martin-L\"of and Schnorr randomness we strengthen the classical definition 
of uniform distribution.
With  this strengthening, 
 in Theorem~\ref{thm:Sigma01} we characterize
Martin-L\"of and Schnorr randomness in terms of uniform distribution modulo~1.

%%%%%%%%%%%%%%%%%%%%%%%%%%%%%%%%
\subsection{$\+C$-uniform distribution}
%\subsubsection{$\+C$-uniform distribution}
%%%%%%%%%%%%%%%%%%%%%%%%%%%%%%%%
A natural extension of uniform distribution is to replace intervals by a larger 
family of subsets of $[0,1]$.
Recall that $\lambda$ denotes Lebesgue measure. 

\begin{definition}[$\+ C$-u.d. mod~$1$]\label{def:S-ud}
Let $\+ C$ be a class of measurable subsets of $[0,1]$.
A~sequence $(x_n)_{n\geq 1}$ of real numbers is $\+ C$-uniformly distributed
modulo $1$, abbreviated $\+ C$-u.d.,  if
for every $A\in\+ C$,
\begin{align}\label{eq Cud}
\lim_{N\to \infty}\frac{1}{N}\card\Big\{n: 1\leq n\leq N, \{x_n\}\in A\Big\}=\lambda(A).
\end{align}
\end{definition}

The following two remarks delimitate some boundaries of the notion.

\begin{remark}[When $\+C$ is too small the notion adds nothing new]
In case  $\+C$ is the family of the rational intervals,
 or  it is the family of all Jordan-measurable subsets,
 or it is  the family of $\lambda$-continuous sets (which are measurable sets whose boundary has measure~$0$,
see~\cite[Chapter~1,p.5-6]{KuipersNiederreiter2006}),
then 
$\+C$-uniform distribution is exactly uniform distribution. 
\end{remark}

\begin{remark}[When $\+C$ is too large the notion is vacuous]\label{rk pitfall}
If the class $\+C$ contains all open subsets of $[0,1]$
then there is no sequence $(x_{n})_{n\geq1}$ that is $\+ C$-u.d.
Indeed, letting 
\begin{align*}
A&=[0,1) \cap \bigcup_{n\geq1} \left(\{x_n\} - 2^{-n-2}, \{x_n\} + 2^{-n-2} \right)
\end{align*}
we have $\lambda(A)\leq1/2$ whereas the left-hand side of equality~\eqref{eq Cud} 
is~$1$.
\end{remark}

On the other hand, the next proposition shows that the notion is not trivial if~$\+C$ is any countable family.
A sequence $(x_n)_{n\geq 1}$ of 
real numbers in the unit interval
can be viewed as a point in the Cartesian product $[0,1]^\N$. 
The Lebesgue measure $\lambda$ on $[0,1]$ induces the 
product measure $\lambda_\infty$ on $[0,1]^\N$.

\begin{proposition}\label{prop:ud ae}
Let  $\+ C$ be a countable class of measurable subsets of $[0,1]$.
Then  $\lambda_\infty$-almost all elements in $[0,1]^\N$ are  $\+C$-u.d.  
\end{proposition}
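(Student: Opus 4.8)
The plan is to reduce to a single set. Fix $A\in\+C$ and put $p=\lambda(A)$. For $(x_n)_{n\geq1}\in[0,1]^\N$ define $X_n=1$ if $\{x_n\}\in A$ and $X_n=0$ otherwise. Since the coordinate projections $[0,1]^\N\to[0,1]$ are mutually independent under $\lambda_\infty$ and each pushes $\lambda_\infty$ forward to $\lambda$, the random variables $(X_n)_{n\geq1}$ are independent; and since the map $x\mapsto\{x\}$ agrees with the identity on $[0,1)$ and sends the single point $1$ to $0$, the law of each $X_n$ is the Bernoulli law with parameter $\lambda(\{x\in[0,1]:\{x\}\in A\})=\lambda(A\cap[0,1))=p$. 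Hence $(X_n)_{n\geq1}$ is an i.i.d.\ Bernoulli$(p)$ sequence on the probability space $([0,1]^\N,\lambda_\infty)$.

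By the Strong Law of Large Numbers, $\frac1N\sum_{n=1}^N X_n\to \mathbb E[X_1]=p=\lambda(A)$ for $\lambda_\infty$-almost every $(x_n)_{n\geq1}$. As $\frac1N\sum_{n=1}^N X_n=\frac1N\#\{n:1\leq n\leq N,\ \{x_n\}\in A\}$, this is precisely equality~\eqref{eq Cud} for the set $A$, valid outside a $\lambda_\infty$-null set $\+N_A$. Setting $\+N=\bigcup_{A\in\+C}\+N_A$, which is $\lambda_\infty$-null because $\+C$ is countable, every $(x_n)_{n\geq1}\notin\+N$ satisfies \eqref{eq Cud} for all $A\in\+C$, i.e.\ is $\+C$-u.d.

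I expect no real obstacle here; the two points to state carefully are that the fractional-part map alters the distribution of a uniform variable only on the $\lambda$-null set $\{1\}$, so that each $X_n$ really is Bernoulli$(p)$, and that the exceptional set $\+N_A$ is $\lambda_\infty$-measurable --- which holds since $N\mapsto\frac1N\sum_{n\leq N}X_n$ is a sequence of measurable functions and the set on which a sequence of measurable functions fails to converge to the constant $p$ is measurable. If one wishes to avoid quoting the SLLN, one can instead bound $\mathbb E\big[\big(\frac1N\sum_{n\leq N}(X_n-p)\big)^4\big]=O(N^{-2})$ by expanding the fourth power and discarding, via independence, all terms containing a factor $X_n-p$ to the first power, then apply the Borel--Cantelli lemma to obtain $\lambda_\infty$-a.s.\ convergence along all $N$, using the trivial monotonicity estimates on the counting function to fill the gaps between consecutive values of $N$; but invoking the SLLN is the most economical route.
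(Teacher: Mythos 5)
Your proof is correct and follows essentially the same route as the paper's: reduce to a single measurable set $A$, observe that the averages of the indicator of $A$ along an i.i.d.\ uniform sequence converge $\lambda_\infty$-almost surely to $\lambda(A)$, and then take a countable union of null sets over $\+C$. The only cosmetic difference is that you invoke the Strong Law of Large Numbers directly, whereas the paper cites a lemma from Kuipers--Niederreiter on bounded measurable functions that encapsulates the same almost-sure convergence.
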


\begin{remark}
Proposition \ref{prop:ud ae} is an easy variant of a classical 
result due to Hlawka, 1956 (see  \cite[Theorem~2.2 Chapter~3 p.183]{KuipersNiederreiter2006}).
Let $\mu$ be a  probability measure on the Borel subsets
 of a compact space $X$ having a countable basis. 
Then $\mu_\infty(S)=1$ where $S$ is the set of sequences $(\{x_n\} )_{{n\in\N}}$ 
of elements of $X$ such that, for every real valued continuous function $f$ defined on $X$, we have
$\lim_{N\to\infty}{\frac {1}{N}}\sum _{n=1}^{N}f (\{x_n\} )=
  \int_{X} f$.
\end{remark}

\begin{proof}[Proof of Proposition~\ref{prop:ud ae}]
By Weyl's Criterion   (see ~\cite[Theorem 1.2]{Bugeaud2012}
or~\cite[Theorem 1.1 Chapter~1, p.2]{KuipersNiederreiter2006}),
a sequence $(x_n)_{n\geq 1}$ 
of real numbers is u.d.
 if, and only if, for 
every  real valued continuous function $f$ defined on $[0,1]$,
\begin{align}\label{eq mean f on the xn = integral f}
\lim_{N\to\infty}{\frac {1}{N}}\sum _{n=1}^{N}f (\{x_n\} )=
  \int_{0}^{1}f(x)\,dx.
\end{align}
We  use a result which follows from 
\cite[Lemma~2.1 Chapter~3 p.182]{KuipersNiederreiter2006},
where it is stated for bounded Borel functions
but its proof applies as well to bounded measurable functions $[0,1]\to\R$, 
hence it applies to our case:
if $f:[0,1]\to\R$ is a bounded measurable function
then, for $\lambda_\infty$-almost every sequence $(x_n)_{n\geq 1}$,
equality~\eqref{eq mean f on the xn = integral f} holds
(i.e. the mean value of $f$ on the $x_n$'s exists
and is equal to the integral of~$f$).

The case where $\+ C$ consists of just one measurable set $A$ is solved by the result above,
letting $f$ be the characteristic function of~$A$.
By countable additivity of $\lambda_\infty$, we get the case where $\+C$ is countable.
\end{proof}

%%%%%%%%%%%%%%%%%%%%%%%%%%%%%%%%
\subsection{$\Sigma^0_1$-uniform distribution}
%%%%%%%%%%%%%%%%%%%%%%%%%%%%%%%%

Introducing a computability constraint, 
the following definition avoids the pitfall of Remark~\ref{rk pitfall} and benefits from Proposition~\ref{prop:ud ae}.
Since the class $\Sigma^0_1$ (see Definition~\ref{def:Sigma01})
is a countable family of subsets in $[0,1]$, 
 Proposition~\ref{prop:ud ae} ensures that
almost all sequences $(x_n)_{n\geq 1}$ of real numbers 
are $\Sigma^0_1$-u.d.  

\begin{definition}
Let Schnorr $\Sigma^0_1$-u.d. mean $\+C$-u.d. where $\+C$ is the family of 
$\Sigma^0_1$ subsets of $[0,1]$ having computable measure.
 \end{definition}

Clearly, $\Sigma^0_1$-u.d. --- hence also Schnorr $\Sigma^0_1$-u.d. --- imply plain u.d.
Let's see that the notions are different.
\begin{proposition}
There is a sequence $(x_n)_{n\geq 1}$ that is u.d. 
but not Schnorr $\Sigma^0_1$-u.d.
\end{proposition}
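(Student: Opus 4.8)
The plan is to produce a single set $A$ that belongs to the family defining Schnorr $\Sigma^0_1$-u.d.\ and inside which an entire u.d.\ sequence can be forced to live; the frequency of visits to $A$ is then identically $1$, while $\lambda(A)<1$, so \eqref{eq Cud} fails for $A$. For $A$ I would take the complement of a ``fat Cantor'' set: let $K\subseteq[0,1]$ be obtained by deleting from $[0,1]$, at stage $n\geq1$, the open middle subinterval of relative length $4^{-n}$ from each of the $2^{n-1}$ surviving closed intervals, so that $\lambda(K)=\sum_{n\geq1}2^{n-1}4^{-n}=1/2$. The deleted intervals have dyadic rational endpoints and are enumerated by an obvious algorithm, so $A:=[0,1]\setminus K$ is $\Sigma^0_1$; the set $K$ is closed with empty interior, hence $A$ is dense in $[0,1]$; and $\lambda(A)=1/2$ is computable. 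Thus $A$ is a $\Sigma^0_1$ set with computable measure.

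Now take any classical u.d.\ sequence, for instance $y_n=\{n\sqrt2\}$, which is u.d.\ by the Bohl--Sierpi\'nski--Weyl theorem recalled in Subsection~\ref{ss:ud}. Using that $A$ is open and dense, choose for every $n\geq1$ a point $x_n\in A$ with $|x_n-y_n|<1/n$. I claim $(x_n)_{n\geq1}$ is u.d.\ but not Schnorr $\Sigma^0_1$-u.d.

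For the first claim I would invoke the elementary principle that a vanishing perturbation preserves uniform distribution: if $x_n-y_n\to0$ and $(y_n)_{n\geq1}$ is u.d., then $(x_n)_{n\geq1}$ is u.d. This follows immediately from Weyl's criterion, since for each nonzero integer $h$ one has $|e^{2\pi i h x_n}-e^{2\pi i h y_n}|\leq 2\pi|h|\,|x_n-y_n|\to0$, and the Ces\`aro means of a null sequence tend to $0$, so $\frac1N\sum_{n\leq N}e^{2\pi i h x_n}$ has the same limit $0$ as $\frac1N\sum_{n\leq N}e^{2\pi i h y_n}$. For the second claim, note that $x_n\in A\subseteq(0,1)$ gives $\{x_n\}=x_n\in A$ for all $n$, so $\frac1N\#\{n: 1\leq n\leq N,\ \{x_n\}\in A\}=1$ for every $N$, which does not converge to $\lambda(A)=1/2$; since $A$ is a $\Sigma^0_1$ set with computable measure, $(x_n)_{n\geq1}$ fails to be $\+C$-u.d.\ for that $\+C$, i.e.\ it is not Schnorr $\Sigma^0_1$-u.d.

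I do not expect a genuine obstacle; the one point requiring care is keeping $A$ simultaneously dense, effectively open, and of computable measure strictly below $1$, which is exactly what the fat-Cantor complement delivers, its measure being fixed in advance. A superficially simpler attempt --- use the van der Corput sequence, whose terms are all dyadic rationals, against some effectively open set of small measure containing every dyadic rational --- stumbles precisely on certifying that such a set has computable measure, so I would avoid it in favour of the construction above together with the perturbation principle.
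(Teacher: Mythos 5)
Your proof is correct, but it takes a genuinely different route from the paper's. The paper fixes a \emph{computable irrational} real $x$, uses the Bohl--Sierpi\'nski--Weyl theorem to get that $(nx)_{n\geq1}$ is u.d., and then builds a $\Sigma^0_1$ set of computable measure $\leq 1/3$ \emph{tailored to that sequence}: the $n$-th interval is a dyadic interval of length $2^{-2n}$ around $\{nx\}$, read off from the first $2n$ binary digits of $nx$, which is possible precisely because $x$ is computable. You instead fix a single ``universal'' witness set $A$ (the complement of a fat Cantor set: open, dense, effectively open, $\lambda(A)=1/2$ computable) and perturb an arbitrary u.d.\ sequence into it, using the standard fact that a perturbation tending to $0$ preserves uniform distribution via Weyl's criterion. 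Both arguments are sound. What the paper's version buys is a stronger and more thematic statement: the counterexample has the form $(u_n(x))_{n\geq1}$ for a single computable real $x$ and the linear Koksma family $u_n(t)=nt$, tying the example to the paper's framework; your sequence is not a priori of that form, and as written is not even computable (though it could be made so, since $A$ is effectively dense and $\{n\sqrt2\}$ is computable). What your version buys is modularity: the witness set does not depend on the sequence, and the perturbation lemma is elementary. Two cosmetic points: the deleted intervals should have \emph{absolute} length $4^{-n}$ (not relative) for your sum $\sum_{n\geq1}2^{n-1}4^{-n}=1/2$ to be the removed measure, and that sum equals $\lambda(A)$, not $\lambda(K)$ --- harmless here since both happen to be $1/2$.
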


\begin{proof}
Let $x$ be a  computable and irrational real number. 
As mentioned in \S\ref{ss:ud}, since $x$ is irrational 
the sequence~$(nx)_{n\geq 1}$ is u.d.;
we show that it is not Schnorr $\Sigma^0_1$-u.d.
Let $(b_{n,j})_{j\geq 1}$ be 
the expansion of $nx$ in base $2$, for $n=1,2,\ldots$.
Let $S$ be the following subset of~$[0,1]$,
\[
S= \bigcup_{n\geq 1} S_{n}
\text{\quad where } S_{n} = \big(0.b_{n,1} b_{n,2} \ldots b_{n, 2n}0,  \  0.b_{n,1} b_{n,2} \ldots b_{n, 2n}1\big)
\]
Since $x$ is computable, $S$ is an open computably enumerable subset of $[0,1]$.
Also, 
it  has computable measure:
the dyadic rational $\measure{\bigcup_{n<p} S_{n}}$ approximating $\lambda(S)$ up to
$ 2^{-2p+2}/3$ since 
\[
\measure{\bigcup_{n\geq p} S_{n}}\leq \sum_{n\geq p} 2^{-2n} = 2^{-2p+2}/3.
\]
By construction, for each $n\geq1$ the real $nx$ is in the $n$-th interval defining $S$, 
so that 
\[
\lim_{N\to \infty}\frac{1}{N} \#\Big\{n: 1\leq n\leq N, \{nx\}\in S\Big\} = 1.
\]
Since 
\[
\lambda(S) \leq \sum_{n\geq 1} 2^{-2n} = 1/3 < 1
\]
 Schnorr $\Sigma^0_1$-u.d. fails.
\end{proof}

%%%%%%%%%%%%%%%%%%%%%%%%%%%%%%%%
\subsection{Theorem \ref{thm:Sigma01}: From $\Sigma^0_1$-u.d. to randomness}
%%%%%%%%%%%%%%%%%%%%%%%%%%%%%%%%

We consider  the following classical notion.

\begin{definition}[Lipschitz function]\label{def ell Lipschitz}
For  $\ell>0$,
a function $f:[0,1]\to\R$ is \mbox{$\ell$-Lipschitz} if
$|f(x)-f(y)| \leq \ell |x-y|$ for every $x,y\in[0,1]$.
\end{definition}

\begin{theorem}[From $\Sigma^0_1$-u.d. to randomness]\label{thm:Sigma01}
Let $(u_n)_{n\geq 1}$ be a computable sequence of functions $[0,1]\to \R$
which is computably Lipschitz, i.e. $(\ell_n)_{n\geq 1}$-Lipschitz
for some computable sequence $(\ell_n)_{n\geq 1}$ of rationals.
If the real $x\in[0,1]$ is such that
$(u_n(x))_{n\geq 1}$ is $\Sigma^0_1$-u.d. (respectively Schnorr $\Sigma^0_1$-u.d.)
then $x$ is Martin-L\"of (respectively  Schnorr) random.
\end{theorem}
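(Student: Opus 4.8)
The plan is to prove the contrapositive: if $x$ is not Martin-L\"of random (respectively not Schnorr random), then there is a $\Sigma^0_1$ set $A$ (respectively a $\Sigma^0_1$ set with computable measure) such that the frequency of $n\le N$ with $\{u_n(x)\}\in A$ fails to converge to $\lambda(A)$. So fix a Martin-L\"of test $(U_m)_{m\ge1}$ (respectively a Schnorr test, so each $\lambda(U_m)$ is computable uniformly in $m$) that captures $x$, meaning $x\in\bigcap_m U_m$. The idea is that since $x$ lies deep inside small effectively open sets, and each $u_n$ is Lipschitz with a computably bounded constant, the points $u_n(x)$ are ``trapped'' inside a corresponding effectively open set of small measure into which they fall with frequency $1$, contradicting $\Sigma^0_1$-u.d.

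First I would fix $m$ and build, from $U_m$, a target set for the sequence $(u_n(x))_{n\ge1}$. The natural candidate is $A_m=[0,1)\cap\bigcup_{n\ge1}\{u_n(z):z\in U_m\}$, i.e.\ the union of the images $u_n(U_m)$ reduced mod $1$; since $U_m$ is a computable union of rational intervals, $u_n$ is computable and $\ell_n$-Lipschitz with $\ell_n$ computable, each $u_n(U_m)$ is contained in a $\Sigma^0_1$ set whose measure is at most $\ell_n\,\lambda(U_m)$, and one can produce a computable enumeration of rational intervals covering it (cover each rational subinterval $(a,b)$ of $U_m$ by an interval of length $\ell_n(b-a)$ around $u_n$ of a rational point, using a modulus of continuity for $u_n$). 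Then $A_m$ is $\Sigma^0_1$ uniformly in $m$. By construction $x\in U_m$ forces $\{u_n(x)\}\in A_m$ for every $n$, so the left-hand side of~\eqref{eq Cud} for $A_m$ equals $1$ for all $N$.

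The main obstacle is controlling $\lambda(A_m)$: the bound $\lambda(u_n(U_m))\le \ell_n\lambda(U_m)$ is useless after summing over $n$ because $\sum_n\ell_n$ diverges and $\bigcup_n u_n(U_m)$ can fill up $[0,1)$. The remedy is to not take all $n$ at once but to pass to a single well-chosen index. For each $N$ define $A_m^{(N)}=[0,1)\cap\bigcup_{n\le N}u_n(U_m)$; this has measure at most $\big(\sum_{n\le N}\ell_n\big)\lambda(U_m)$, which is $<1$ once $m$ is large enough relative to $N$ (here we use that $\lambda(U_m)\to0$ effectively, so for each $N$ we can computably pick $m(N)$ with $\big(\sum_{n\le N}\ell_n\big)\lambda(U_{m(N)})\le 1/2$). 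Now run a diagonal/blocking construction: choose $N_1<N_2<\cdots$ and set $A=\bigcup_k \big(A_{m(N_k)}^{(N_k)}\setminus A_{m(N_{k-1})}^{(N_{k-1})}\big)$ arranged so that (i) $\lambda(A)$ is bounded away from $1$ — in the Schnorr case, by choosing the $N_k$ to grow fast enough and truncating, one even gets $\lambda(A)$ computable since each piece has computable measure and the tails are controlled — while (ii) for infinitely many $N$ (namely $N=N_k$) the frequency $\frac1N\#\{n\le N:\{u_n(x)\}\in A\}$ is close to $1$ because $x\in U_{m(N_k)}$ puts $u_1(x),\dots,u_{N_k}(x)$ into $A_{m(N_k)}^{(N_k)}$, and the only ones possibly excluded from $A$ are those $\le N_{k-1}$, a vanishing fraction. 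Hence the limit in~\eqref{eq Cud} for $A$ cannot equal $\lambda(A)<1$, so $(u_n(x))_{n\ge1}$ is not $\Sigma^0_1$-u.d. (respectively not Schnorr $\Sigma^0_1$-u.d.). Taking the contrapositive gives the theorem; the delicate points are the uniform effectivity of the covers of $u_n(U_m)$ via a computable modulus of continuity, and, for the Schnorr half, arranging the blocking so that $\lambda(A)$ comes out computable rather than merely left-c.e.
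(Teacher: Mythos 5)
Your overall strategy is the paper's: argue the contrapositive, push a test capturing $x$ forward through the $u_n$'s, use the Lipschitz constants to control the measure of the image, and observe that the hit frequency is $1$ along a subsequence while the measure stays below $1$. But two steps as written do not go through. First, the measure bound on $A$: you choose $m(N)$ so that $\bigl(\sum_{n\le N}\ell_n\bigr)\lambda(U_{m(N)})\le 1/2$, which bounds each block $A^{(N_k)}_{m(N_k)}$ by $1/2$ individually --- but $A$ is a union over infinitely many $k$ of pieces each of measure up to $1/2$, and ``choosing the $N_k$ to grow fast'' does nothing to keep $\lambda(A)$ away from $1$. The fix is to make the bound depend on $k$: choose $m(N_k)$ with $\bigl(\sum_{n\le N_k}\ell_n\bigr)\lambda(U_{m(N_k)})\le 2^{-k-2}$, so that $\lambda(A)\le 1/2$ outright. (This is exactly the paper's move in transposed form: it thins the test so that $\lambda(V_n)\le 2^{-n-3}/\ell_n$ and takes $A=\bigcup_n u_n(V_n)$, one function per test set rather than a block of functions per test set.) Second, the set differences $A^{(N_k)}_{m(N_k)}\setminus A^{(N_{k-1})}_{m(N_{k-1})}$ are a liability: a difference of two $\Sigma^0_1$ sets is not in general $\Sigma^0_1$, so your $A$ is a priori not an admissible witness. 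Fortunately they are also unnecessary: with $A^{(N_0)}_{m(N_0)}=\emptyset$ your union of differences equals the plain union $\bigcup_k A^{(N_k)}_{m(N_k)}$, which is $\Sigma^0_1$; and once the per-piece measures are summable as above, every $u_n(x)$ with $n\le N_k$ lies in $A$ (not merely all but the first $N_{k-1}$ of them), so the frequency at $N=N_k$ is exactly $1$ while $\lambda(A)\le 1/2$.

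Beyond that, the two points you flag as delicate need the care the paper gives them. The images $u_n(U_m)$ live in $\R$, not in $[0,1]$, so the witness must be the set of \emph{fractional parts} of the union; writing $[0,1)\cap\bigcup(\cdots)$ discards points instead of reducing them mod $1$ and would break the frequency claim. One must check that the fractional-part image of a $\Sigma^0_1$ set is $\Sigma^0_1$ of no larger measure, and --- for the Schnorr half --- that computability of the measure survives the reduction; the paper isolates this as Lemma~\ref{l:measure frac U}. Also, the bound $\lambda(\text{cover of }u_n(U_m))\le C\,\ell_n\,\lambda(U_m)$ requires first rewriting $U_m$ as a \emph{disjoint} union of rational intervals before applying Lipschitzness interval by interval; covering an arbitrary overlapping enumeration loses control of the constant. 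With these repairs your argument becomes essentially the paper's proof, including its Schnorr case, where computability of $\lambda(A)$ is obtained by truncating both the union over $k$ and the inner unions using the computability of the $\lambda(U_m)$'s.
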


\begin{remark}\label{rk ell Lipschitz}
By the mean value theorem, every function $u:[0,1]\to\R$ continuously differentiable  
is $\ell$-Lipschitz for any $\ell \geq \max_{t\in[0,1]}|u'(t)|$.
In particular, the class of computable sequences of computably Lipschitz functions $[0,1]\to \R$ is far wider than the Koksma class.
\end{remark}
%Using Remark~\ref{rk ell Lipschitz}, we obtain the following corollary. 
%
%\begin{corollary}\label{cor:ML}
%Let $x$ be a real in $[0,1]$ and 
%let $(u_n)_{n\geq 1}$ be an effective Koksma sequence of functions $[0,1]\to\R$.
%If $(u_n(x))_{n\geq 1}$ is $\Sigma^0_1$-u.d. mod~$1$ 
%then $x$ is Martin-L\"of  random.
%\end{corollary}

The proof of Theorem~\ref{thm:Sigma01} is done in Section~\ref{s:Sigma01}.

\section{Characterization of randomness  with $\Sigma^0_{1}$-u.d.}

%%%%%%%%%%%%%%%%%%%%%%%%%%%%%%%%
\subsection{From randomness to $\Sigma^0_1$-u.d.}
%%%%%%%%%%%%%%%%%%%%%%%%%%%%%%%%

Let's consider  these classical notions and  results on measure-preserving ergodic operators.

\begin{definition}
\begin{enumerate}
\item Let $T\colon [0,1)\to[0,1)$. A set $A\subseteq[0,1)$ is almost invariant if
$A$ and $T^{-1}(A)$ coincide up to a Lebesgue measure $0$ set.
\item A measure-preserving operator $T\colon [0,1)\to[0,1)$ is  ergodic
 if every almost invariant set has Lebesgue measure~$0$ or~$1$. 
\end{enumerate}
\end{definition}

\begin{remark}
Two simple examples of ergodic maps on $[0,1)$ are 
the shift $x\mapsto2x\mod 1$ (corresponding to the shift on the Cantor space $2^\N$),
and the translation $x\mapsto x+a\mod1$ when~$a$ is irrational 
(cf.~\cite{Shiryaev}, Chap. 5, Example p.408). Observe that, in this case, the continuity implied by computability is relative to the topology of the disk: 
a typical neighborhood of $0$ is a set $[0,\theta)\cup(1-\theta,1)$.
\end{remark}

\begin{proposition}[Birkhoff and Khinchin Theorem, 1931]
\label{thm:Birkhoff and Khinchin} 
If $T\colon [0,1)\to[0,1)$ is measure-preserving and ergodic 
and $A\subseteq[0,1)$ 
is Lebesgue measurable then 
for almost all $x\in [0,1)$,
\begin{align*}
\lim_{N\to \infty}\frac{1}{N}  \#\Big\{n: 1\leq n\leq N, T^{n}(x)\in A\Big\}=\lambda(A).
\end{align*}
\end{proposition}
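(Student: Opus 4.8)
The plan is to recognize the statement as the special case $f=\mathbf 1_A$ of Birkhoff's pointwise ergodic theorem and to prove that theorem for bounded measurable $f$, which is all that is needed here. Writing $A_Nf(x)=\frac1N\sum_{n=1}^N f(T^n x)$, I would first establish that $f^*(x)=\lim_{N\to\infty}A_Nf(x)$ exists for almost every $x$ and defines a $T$-invariant function (i.e.\ $f^*\circ T=f^*$ $\lambda$-a.e.), and then invoke ergodicity to conclude that $f^*$ is a.e.\ equal to the constant $\int_0^1 f\,d\lambda$. Applying this to $f=\mathbf 1_A$ gives $A_N\mathbf 1_A(x)=\frac1N\#\{n:1\le n\le N,\ T^n x\in A\}\to\lambda(A)$ for a.e.\ $x$, which is the claim.

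The one genuinely nontrivial ingredient is the \emph{maximal ergodic theorem}: for $g\in L^1([0,1),\lambda)$, setting $S_0g=0$, $S_kg=\sum_{j=0}^{k-1}g\circ T^j$, and $E=\{x:\sup_{k\ge1}S_kg(x)>0\}$, one has $\int_E g\,d\lambda\ge0$. I would prove this by Garsia's short argument: if $k^*\in\{1,\dots,N\}$ attains the maximum in $\max_{1\le k\le N}S_kg$, then $S_{k^*}g=g+S_{k^*-1}(g)\circ T\le g+\max\!\big(0,\max_{1\le j\le N}S_jg\big)\circ T$, so on $E_N=\{\max_{1\le k\le N}S_kg>0\}$ one has the pointwise bound $\max_{1\le k\le N}S_kg\le g+\big(\max_{1\le k\le N}S_kg\big)\circ T$; integrating over $E_N$ and using that $T$ is measure-preserving cancels the two extreme terms and yields $\int_{E_N}g\,d\lambda\ge0$, and since $E_N\uparrow E$, letting $N\to\infty$ gives the claim.

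From the maximal ergodic theorem the a.e.\ convergence is routine bookkeeping. Put $\overline f=\limsup_N A_Nf$ and $\underline f=\liminf_N A_Nf$; a short computation gives $\lim_N\big(A_{N+1}f(x)-A_Nf(Tx)\big)=0$, whence $\overline f$ and $\underline f$ are $T$-invariant. For rationals $\beta<\alpha$, the set $E_{\alpha,\beta}=\{\underline f<\beta\}\cap\{\overline f>\alpha\}$ is $T$-invariant; applying the maximal ergodic theorem to $(f-\alpha)\mathbf 1_{E_{\alpha,\beta}}$ on the invariant set $E_{\alpha,\beta}$ — where $\sup_{k\ge1}S_k(f-\alpha)>0$ holds everywhere because $\overline f>\alpha$ there — gives $\int_{E_{\alpha,\beta}}f\,d\lambda\ge\alpha\,\lambda(E_{\alpha,\beta})$, and symmetrically, applying it to $(\beta-f)\mathbf 1_{E_{\alpha,\beta}}$ gives $\int_{E_{\alpha,\beta}}f\,d\lambda\le\beta\,\lambda(E_{\alpha,\beta})$. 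Since $\alpha>\beta$ this forces $\lambda(E_{\alpha,\beta})=0$; taking the union over all rational pairs $\beta<\alpha$ shows $\overline f=\underline f$ a.e., so $f^*=\lim_N A_Nf$ exists a.e.\ and is $T$-invariant.

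Finally, since $T$ is ergodic, every $T$-invariant measurable function is a.e.\ constant: each superlevel set $\{f^*\le c\}$ differs from its $T$-preimage by a null set, hence has measure $0$ or $1$. Thus $f^*\equiv c$ for some constant $c$, and because $0\le A_N\mathbf 1_A\le1$, the bounded convergence theorem together with the $T$-invariance of $\lambda$ gives $c=\int_0^1 f^*\,d\lambda=\lim_N\int_0^1 A_N\mathbf 1_A\,d\lambda=\lambda(A)$, completing the proof. The main obstacle is the maximal ergodic theorem; everything else is bookkeeping, the only points requiring care being the $n=1$ (rather than $n=0$) normalization and the fact that, for the possibly non-invertible maps at hand (e.g.\ $x\mapsto 2x\bmod1$), both ``measure-preserving'' and ``$T$-invariant'' must be understood in terms of preimages $T^{-1}(\cdot)$.
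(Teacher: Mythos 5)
The paper does not prove this proposition at all: it is stated as the classical Birkhoff--Khinchin theorem (1931) and used as a black box, the real work in that section being its \emph{effectivization} (Proposition~\ref{thm:ergodic}, cited from G\'acs et al.\ and Bienvenu et al.\ and Franklin et al.). So there is no in-paper argument to compare yours against; what you have written is a self-contained proof of the classical statement.

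On its own merits your proof is correct and is the standard route: reduce to the pointwise ergodic theorem for bounded measurable $f$ applied to $f=\mathbf 1_A$, prove the maximal ergodic theorem by Garsia's trick (including $S_0=0$ in the maximum so that $M_N\ge0$, which is what lets the two integrals of $M_N$ cancel after using measure preservation), then run the $E_{\alpha,\beta}$ bookkeeping over rational pairs $\beta<\alpha$, and finish with ergodicity plus bounded convergence to identify the constant as $\lambda(A)$. The one step worth spelling out if you write this in full is why the maximal inequality applied to $(f-\alpha)\mathbf 1_{E_{\alpha,\beta}}$ localizes to $E_{\alpha,\beta}$: since $E_{\alpha,\beta}$ is $T$-invariant (up to null sets), the orbit of any point outside it stays outside, so the sums $S_k$ of the truncated function vanish there and the set $E$ of the maximal theorem coincides with $E_{\alpha,\beta}$ almost everywhere. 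Also, $\{f^*\le c\}$ is a sublevel set, not a superlevel set --- a slip of terminology only. Your closing caveats about the $n=1$ normalization and interpreting invariance via preimages for non-invertible $T$ are exactly the right ones for the maps this paper cares about, such as $x\mapsto 2x\bmod 1$.
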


A decade ago this theorem has been effectivized, first
by P. G\'acs et al.~\cite{GHR11}, 2011 for Schnorr randomness, 
and then 
by L. Bienvenu et  al.~\cite[Theorem 8]{BDHMS2012} 
and by J. Franklin et  al.~\cite[Theorem 6]{FGMN2012}, 2012 
for Martin-L\"of randomness.

\begin{proposition}[{E}ffective Birkhoff Ergodic Theorem\cite{GHR11,BDHMS2012,FGMN2012}]
\label{thm:ergodic} 
Let $T$ be a computable ergodic operator $[0,1)\to[0,1)$
and let $U \subseteq [0,1)$ be a $\Sigma^0_{1}$ (respectively 
 $\Sigma^0_{1}$ with computable measure)  set.
For every Martin-L\"of (respectively  Schnorr) random
real  $x\in[0,1)$ we have
\begin{align*}
\lim_{{N\to+\infty}} 
\dfrac{1}{N} \#\{n \colon 1\leq n\leq N,\ T^{n}(x)\in U\}
&=\lambda(U).
\end{align*}
In other words, if $x$ is Martin-L\"of (respectively  Schnorr) random 
then the sequence $(T^{n}(x))_{n\geq1}$ is $\Sigma^0_{1}$-u.d. 
(respectively Schnorr $\Sigma^0_{1}$-u.d.)
\end{proposition}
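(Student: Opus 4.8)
The plan is to deduce this proposition directly from the cited effectivizations of the Birkhoff ergodic theorem, treating it as a specialization rather than a result to be reproved from scratch. The statement as phrased is almost a corollary of the work of G\'acs--Hoyrup--Rojas and of Bienvenu et al.\ and Franklin et al.: those papers establish that for a computable ergodic transformation $T$ on a computable probability space and an effectively open (respectively, effectively open with computable measure) set $U$, the Birkhoff averages $\frac1N\#\{n\le N : T^n(x)\in U\}$ converge to $\lambda(U)$ at every Martin-L\"of (respectively, Schnorr) random point $x$. So the first step is to observe that $[0,1)$ with Lebesgue measure is a computable probability space in the sense of those papers, and that a $\Sigma^0_1$ subset of $[0,1)$ in the sense of Definition~\ref{def:Sigma01} is precisely an effectively open set in their sense; likewise Schnorr $\Sigma^0_1$ sets match the ``effectively open with computable measure'' hypothesis needed for the Schnorr-randomness version.

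Next I would dispense with the mismatch between \emph{lower-semicomputable functions} (the form in which the ergodic theorem is often stated, via the indicator of an open set being lower semicontinuous and lower semicomputable) and the \emph{counting form} in our display. These are the same thing: $\#\{n : 1\le n\le N,\ T^n(x)\in U\} = \sum_{n=1}^N \mathbf 1_U(T^n(x))$, so the Birkhoff average of $\mathbf 1_U$ along the orbit of $x$ is exactly $\frac1N$ times the count, and $\int \mathbf 1_U\,d\lambda = \lambda(U)$. Hence convergence of the averages of $\mathbf 1_U$ at $x$, which is what the cited theorems deliver, is literally the displayed equality. For the Schnorr case one checks that computability of $\lambda(U)$ is exactly the extra hypothesis those references require so that $\mathbf 1_U$ is ``L^1-computable'' or ``Schnorr layerwise computable'', whichever formulation is used, and then the Schnorr-random conclusion follows.

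The only genuine subtlety — and the step I expect to be the main obstacle — is bookkeeping about which space the cited theorems are stated over and whether the hypothesis there is ``$T$ ergodic'' or the stronger ``$(X,T)$ is an ergodic computable dynamical system with some additional regularity (e.g.\ $T$ computable as a point-to-point map, or only as a measure-preserving map defined almost everywhere)''. I would address this by noting that in all three references the ergodicity hypothesis on a computable $T$ suffices for the pointwise statement at random points, and that our $T$ is assumed computable and ergodic, so no extra hypothesis is needed; if one wanted to be fully careful one invokes the layerwise-computability machinery of \cite{GHR11} (for Schnorr) and \cite{BDHMS2012,FGMN2012} (for Martin-L\"of) verbatim. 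The ``in other words'' sentence is then immediate by unwinding Definition~\ref{def:S-ud}: $\Sigma^0_1$-u.d.\ of $(T^n(x))_{n\ge1}$ means precisely that \eqref{eq Cud} holds for every $A$ in the relevant class, which is what we have just proved set by set.
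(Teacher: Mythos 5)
Your proposal takes essentially the same route as the paper, which offers no independent proof of this proposition: it simply cites \cite{GHR11,BDHMS2012,FGMN2012} and adds a remark that \cite[Theorem 6]{FGMN2012} is stated for $\Pi^0_1$ sets, with the passage to complements being obvious. Your more detailed bookkeeping (matching the computable probability space, identifying the counting form with Birkhoff averages of $\mathbf{1}_U$, and checking the computable-measure hypothesis for the Schnorr case) is correct and subsumes the paper's one-line observation.
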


\begin{remark}
In \cite[Theorem 6]{FGMN2012} the result is stated for $\Pi^0_{1}$ sets and it is a full characterization.
The passage to complements, 
 i.e. to $\Sigma^0_{1}$ sets,
 is obvious. 
\end{remark}

%
%%%%%%%%%%%%%%%%%%%%%%%%%%%%%%%
%%%%%%%%%%%%%%%%%%%%%%%%%%%%%%%
\subsection{\mbox{Theorem~\ref{thm:conclusion}: Characterization of randomness with $\Sigma^0_{1}$-u.d.}}
%%%%%%%%%%%%%%%%%%%%%%%%%%%%%%%%
%%%%%%%%%%%%%%%%%%%%%%%%%%%%%%%
%
Grouping Theorem~\ref{thm:Sigma01} and the Effective  Birkhoff Ergodic Theorem
due to~\cite{GHR11,BDHMS2012,FGMN2012} and stated in 
Proposition~\ref{thm:ergodic}, we obtain our third result, Theorem~\ref{thm:conclusion} .

\begin{theorem}\label{thm:conclusion}
Let  $x\in[0,1)$.
The following conditions are equivalent.
\begin{enumerate}
\item[i.]
There exists a computable sequence $(u_{n})_{n\geq1}$ of functions $[0,1]\to\R$ 
which is computably Lipschitz, 
i.e. $(\ell_{n})_{n\geq1}$-Lipschitz with $(\ell_{n})_{n\geq1}$ 
a computable sequence of rationals,
such that the sequence $(u_{n}(x))_{n\geq1}$ is $\Sigma^0_{1}$-u.d.
\item[ii.] The sequence  $(x+na)_{n\geq1}$, for some irrational number $a$,  is $\Sigma^0_{1}$-u.d.
\item[iii.]
The sequence  $(2^{n}x)_{n\geq1}$ is $\Sigma^0_{1}$-u.d.
\item[iv.]
For every computable measure-preserving and ergodic operator 
$T\colon[0,1)\to[0,1)$, 
the sequence  $(T^{n}(x))_{n\geq1}$ is $\Sigma^0_{1}$-u.d.
\item[v.]
$x$ is Martin-L\"of random.
\end{enumerate}
The same equivalences are valid when replacing $\Sigma^0_{1}$-u.d. by
Schnorr $\Sigma^0_{1}$-u.d. and Martin-L\"of randomness by Schnorr randomness.
\end{theorem}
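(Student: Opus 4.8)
The plan is to establish the cycle of implications
(v.)~$\Rightarrow$~(iv.)~$\Rightarrow$~(ii.), (iii.)~$\Rightarrow$~(i.)~$\Rightarrow$~(v.),
together with the trivial observations that (iv.) specializes to both (ii.) and (iii.).
First, (v.)~$\Rightarrow$~(iv.) is exactly the Effective Birkhoff Ergodic Theorem as recorded in Proposition~\ref{thm:ergodic}: if $x$ is Martin-L\"of (resp.\ Schnorr) random and $T$ is any computable measure-preserving ergodic operator on $[0,1)$, then $(T^n(x))_{n\geq1}$ is $\Sigma^0_1$-u.d.\ (resp.\ Schnorr $\Sigma^0_1$-u.d.). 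Second, both the translation $x\mapsto x+a\bmod 1$ with $a$ irrational and the doubling map $x\mapsto 2x\bmod 1$ are computable, measure-preserving and ergodic (the first needs $a$ to be a \emph{computable} irrational, which one is free to choose; the ergodicity of both was recalled in the remark following the definition of ergodic operator), so (iv.) immediately yields (ii.) and (iii.). For (ii.)~$\Rightarrow$~(i.) and (iii.)~$\Rightarrow$~(i.): the maps $u_n(x)=x+na$ and $u_n(x)=2^n x$ are computable sequences of functions $[0,1]\to\R$, and they are computably Lipschitz --- the former with $\ell_n=1$ for all $n$, the latter with $\ell_n=2^n$ --- so each provides the witness required in~(i.).

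Third, (i.)~$\Rightarrow$~(v.) is precisely Theorem~\ref{thm:Sigma01}, already proved: if there exists a computable, computably Lipschitz sequence $(u_n)_{n\geq1}$ such that $(u_n(x))_{n\geq1}$ is $\Sigma^0_1$-u.d.\ (resp.\ Schnorr $\Sigma^0_1$-u.d.), then $x$ is Martin-L\"of (resp.\ Schnorr) random. This closes the loop. The ``respectively'' version of the whole statement goes through verbatim: Proposition~\ref{thm:ergodic} and Theorem~\ref{thm:Sigma01} are both stated in the two-track Martin-L\"of / Schnorr form, and the intermediate implications (iv.)~$\Rightarrow$~(ii.), (iii.) and (ii.), (iii.)~$\Rightarrow$~(i.) are insensitive to which notion of $\Sigma^0_1$-u.d.\ one uses, since they only substitute concrete functions into the hypothesis.

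The one genuine point requiring care --- the ``main obstacle'', such as it is --- is the translation case, where the continuity/computability of $T\colon[0,1)\to[0,1)$ must be understood with respect to the circle topology rather than the interval topology, as flagged in the remark after the ergodic-operator definition: a neighborhood of $0$ has the form $[0,\theta)\cup(1-\theta,1)$. One must check that $x\mapsto x+a\bmod 1$ is computable in this sense (it is --- addition mod~$1$ is computable on the reals-as-circle) and that feeding the sequence $(x+na\bmod1)_{n\geq1}$ into the $\Sigma^0_1$-u.d.\ hypothesis of~(i.) is legitimate even though the $u_n$ in~(i.) are maps $[0,1]\to\R$ rather than $[0,1)\to[0,1)$; here one simply uses $u_n(x)=x+na$ as a genuine $\R$-valued function and notes that $\{u_n(x)\}=x+na\bmod1$, so the two formulations of u.d.\ agree. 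Everything else is bookkeeping: assembling the previously established Theorem~\ref{thm:Sigma01} and Proposition~\ref{thm:ergodic} around the two explicit examples.
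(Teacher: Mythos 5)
Your proposal follows essentially the same route as the paper's proof: $(i)\Rightarrow(v)$ by Theorem~\ref{thm:Sigma01}, $(v)\Rightarrow(iv)$ by the effective Birkhoff ergodic theorem (Proposition~\ref{thm:ergodic}), $(iv)\Rightarrow(ii),(iii)$ by specializing to the translation and doubling maps, and $(ii),(iii)\Rightarrow(i)$ by exhibiting the corresponding computably Lipschitz sequences. Your added care about the circle topology for the translation and about choosing $a$ computable (needed both for $T$ to be a computable operator and for $(x\mapsto x+na)_{n\geq1}$ to be a computable sequence of functions in $(ii)\Rightarrow(i)$) matches the paper's remarks and is, if anything, slightly more explicit than the paper's ``straightforward''.
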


\begin{proof}
$(i)\Rightarrow(v)$ is our Theorem~\ref{thm:Sigma01} and
$(v)\Rightarrow(iv)$ is the effective version of Birkhoff Ergodic Theorem,
see~Proposition~\ref{thm:ergodic}.

For $(iv)\Rightarrow(iii)$ consider the operator $T\colon[0,1)\to[0,1)$
 such that $T(x)=2x\mod 1$. It is computable measure-preserving and ergodic. 
Observe that $T^{n}(x) = 2^n x \mod 1$ 
hence the two sequences $(T^{n}(x))_{n\geq1}$ and $(2^n x)_{n\geq1}$ are simultaneously uniformly distributed modulo $1$
or not relative to any fixed subset.

Implication $(iv)\Rightarrow(ii)$ is similar.

Implications $(iii)\Rightarrow(i)$ and $(ii)\Rightarrow(i)$ are straightforward.
%$(4)\Rightarrow(2)$ and $(4)\Rightarrow(2)$ are trivial.
%For $(2)\Rightarrow(1)$, to $T$ is naturally associated a
%$(2^n)_{n\geq1}$-Lipschitz sequence of functions $[0,1]\to\R$,
%namely $(x\mapsto 2^n x)_{n\geq1}$.
%Clearly, $T^{n}(x) = 2^n x \mod 1$ hence the two sequences $(T^{n}(x))_{n\geq1}$
%and $(2^n x)_{n\geq1}$ are simultaneously uniformly distributed modulo $1$
%or not relative to any fixed subset. 
\end{proof}

\begin{remark}
Observe that $(x\mapsto 2^n x)_{n\geq1}$ 
is a Koksma sequence of functions $[0,1]\to\R$
whereas $(x\mapsto x+na)_{n\geq1}$ is not Koksma.
\end{remark}

\begin{remark}
As aforementioned  in Section~\ref{ss:ud}, the sequence  $(na)_{n\geq1}$ is u.d. when $a$ is irrational hence,
for every real $x$, so is $(x+na)_{n\geq1}$ .
Also, $(2^{n}x)_{n\geq1}$ is u.d. exactly when $x$ is normal to base $2$.
But being $\Sigma^0_{1}$-u.d. or 
Schnorr $\Sigma^0_{1}$-u.d is a stronger condition.
\end{remark}

\begin{remark}
We cannot replace the existential quantification in condition (i) of 
Theorem~\ref{thm:conclusion} by a universal one: 
consider a sequence $(u_n)_{n\in \N}$ such that for all~$n$, $u_n$ is the same
Lipschitz function, then $(u_{n}(x))_{n\geq1}$ is constant, hence is not u.d.
and a fortiori not $\Sigma^0_{1}$-u.d.
\end{remark}

%%%%%%%%%%%%%%%%%%%%%%%%%%%%%%%
%%%%%%%%%%%%%%%%%%%%%%%%%%%%%%%
%%%%%%%%%%%%%%%%%%%%%%%%%%%%%%%
%%%%%%%%%%%%%%%%%%%%%%%%%%%%%%%
\section{Proof of Theorem~\ref{thm:1}}\label{s:thm1}
%%%%%%%%%%%%%%%%%%%%%%%%%%%%%%%
%%%%%%%%%%%%%%%%%%%%%%%%%%%%%%%
%%%%%%%%%%%%%%%%%%%%%%%%%%%%%%%
%%%%%%%%%%%%%%%%%%%%%%%%%%%%%%%
%
%%%%%%%%%%%%%%%%%%%%%%%%%%%%%%%
%%%%%%%%%%%%%%%%%%%%%%%%%%%%%%%
\subsection{Variations around Koksma's General Metric Theorem}
%%%%%%%%%%%%%%%%%%%%%%%%%%%%%%%
%%%%%%%%%%%%%%%%%%%%%%%%%%%%%%%
%

\begin{lemma}
\label{l Weyl variant}\mbox{ }
We follow Vinogradov's notation $e(x)$ for~$e^{2i\pi x}$.
For $\mathbf{x} = (x_n)_{n\geq 1}$ a sequence of reals, we let
$S_N(\mathbf{x})  = \dfrac{1}{N}\sum_{j=1}^{N}e(x_j)$
and we denote $h\mathbf{x}$ the sequence of reals $(hx_n)_{n\in\N}$.

The following  are equivalent:
\begin{enumerate}
\item The sequence $(x_n)_{n\geq 1}$ is u.d. mod~$1$.
\item For every $h\in\Z\setminus\{0\}$, $\lim_{n\to\infty}S_N(h\mathbf{x})  = 0$.
\item There exists a strictly increasing sequence of positive integers $(M_k)_{k\in\N}$ such that
$M_{k+1}-M_k = o(M_k)$ and 
$\lim_{k\to\infty}S_{M_k}(h\mathbf{x})  = 0$ for every $h\in\Z\setminus\{0\}$.
\end{enumerate}
\end{lemma}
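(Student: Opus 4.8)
The plan is to prove this as a strengthening of the classical Weyl criterion, so I would first recall that the equivalence of (1) and (2) is precisely Weyl's Criterion, available from the cited references; only the implications involving (3) need genuine work, and the real content is $(3)\Rightarrow(1)$, since $(2)\Rightarrow(3)$ is trivial (take $M_k=k$).

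For $(2)\Rightarrow(3)$ I would simply observe that any sequence $M_k$ with $M_{k+1}-M_k=o(M_k)$—for instance $M_k=k$, or $M_k=\lfloor 2^{\sqrt k}\rfloor$—works, because if $S_N(h\mathbf x)\to0$ along all $N$ then it converges to $0$ along any subsequence. The substantive direction is $(3)\Rightarrow(2)$ (which combined with $(2)\Leftrightarrow(1)$ gives $(3)\Rightarrow(1)$): fix $h\in\Z\setminus\{0\}$ and a sequence $(M_k)$ with the stated gap condition, and show $S_N(h\mathbf x)\to0$ for \emph{all} $N$. Given $N$, choose $k=k(N)$ with $M_k\leq N<M_{k+1}$. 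Then write
\begin{align*}
N\,S_N(h\mathbf x) &= M_k\,S_{M_k}(h\mathbf x) + \sum_{j=M_k+1}^{N} e(hx_j),
\end{align*}
so that $|N\,S_N(h\mathbf x)|\leq M_k\,|S_{M_k}(h\mathbf x)| + (N-M_k) \leq M_k\,|S_{M_k}(h\mathbf x)| + (M_{k+1}-M_k)$. Dividing by $N\geq M_k$ gives $|S_N(h\mathbf x)|\leq |S_{M_k}(h\mathbf x)| + (M_{k+1}-M_k)/M_k$. As $N\to\infty$ we have $k(N)\to\infty$, the first term tends to $0$ by hypothesis, and the second tends to $0$ by the gap condition $M_{k+1}-M_k=o(M_k)$; hence $S_N(h\mathbf x)\to0$.

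The only mild subtlety—and the step I would flag as the place to be careful rather than a real obstacle—is making sure the gap condition is used in exactly the right form: one needs $(M_{k+1}-M_k)/M_k\to0$, which is literally the statement $M_{k+1}-M_k=o(M_k)$, and one should note that $k(N)\to\infty$ because $(M_k)$ is strictly increasing and unbounded (the latter also follows from the gap condition together with strict monotonicity, or can be taken as part of "strictly increasing sequence of positive integers" tending to infinity). I would then close by remarking that this lemma is the tool that lets one verify u.d.\ mod~$1$ by checking exponential sums only along a conveniently sparse—but not too sparse—subsequence of indices, which is what the proof of Theorem~\ref{thm:1} will exploit when invoking Koksma's General Metric Theorem.
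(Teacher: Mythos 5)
Your proof is correct and follows essentially the same route as the paper: reduce to Weyl's Criterion for $(1)\Leftrightarrow(2)$, take $M_k=k$ for $(2)\Rightarrow(3)$, and for $(3)\Rightarrow(2)$ split the sum at $M_k\leq N<M_{k+1}$ and bound the discrepancy between $S_N$ and $S_{M_k}$ by $O\bigl((M_{k+1}-M_k)/M_k\bigr)$ using $|e(x)|=1$. Your algebraic decomposition of $N\,S_N(h\mathbf{x})$ is only cosmetically different from the paper's estimate of $|S_N(h\mathbf{x})-S_{M_k}(h\mathbf{x})|$, and it even sidesteps the paper's auxiliary remark that $M_{k+1}-M_k=o(M_k)$ is equivalent to $M_{k+1}-M_k=o(M_{k+1})$.
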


\begin{proof}
The equivalence of (1) and (2) is Weyl's Criterion
(\cite[Theorem 2.1 Chapter~1, p.2]{KuipersNiederreiter2006}). 
Letting $M_k=k$, implication $(2)\Rightarrow(3)$ is trivial. 

To prove its converse, observe that $|e(x)|=1$ for all $x$,
so that the modulus of a sum of $p$ such exponentials is bounded by $p$.
Thus, if $M_k\leq N<M_{k+1}$, 
\begin{align*}
| S_N(h\mathbf{x}) - S_{M_k}(h\mathbf{x}) |
&= \left| \left(\dfrac{1}{N} - \dfrac{1}{M_k}\right) \sum_{\ell=1}^{\ell=M_k} e(hx_\ell)
+ \dfrac{1}{N} \sum_{\ell=M_k+1}^{\ell=N} e(hx_\ell) \right|
\\&\leq \dfrac{2(N-M_k)}{N}
\\
&\leq  \dfrac{2(M_{k+1}-M_k)}{M_{k+1}},
\end{align*}
the last inequality holds because  $\dfrac{t-M_k}{t}$ is increasing in $t\geq M_{k}$.
Thus, 
\[
|S_N(h\mathbf{x})| \leq |S_{M_k}(h\mathbf{x}) | + \dfrac{2(M_{k+1}-M_k)}{M_{k+1}}.
\]
The hypothesis ensures that  in the last expression both terms tend to $0$. 
\end{proof}

\begin{remark}\label{rk oMk oMk+1}
Clearly, $M_{k+1}-M_k = o(M_k)$ implies $M_{k+1}-M_k = o(M_{k+1})$.
The converse is also true: arguing by contraposition, 
if $M_{k+1}-M_k \geq\varepsilon M_k$ for infinitely many $k$'s then also
$M_{k+1}-M_k \geq \dfrac{\varepsilon}{1+\varepsilon}  M_{k+1}$ for the same $k$'s.
\end{remark}

\begin{corollary}\label{cor Weyl variant}
Let $x$ be a real and 
let $(M_k)_{k\in\N}$ be a strictly increasing sequence of positive integers 
such that $M_{k+1}-M_k = o(M_k)$. 
The following are equivalent:
\begin{enumerate}
\item
For every effective Koksma sequence $\mathbf{u}=(u_n)_{n\geq1}$, 
 $(u_n(x))_{n\geq 1}$ is u.d.
\item
For every effective Koksma sequence $\mathbf{u}$, 
$\lim_{k\to\infty}S_{M_k}(\mathbf{u}(x))  = 0$.
\end{enumerate}
\end{corollary}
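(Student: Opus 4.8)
\textbf{Proof proposal for Corollary~\ref{cor Weyl variant}.}
The plan is to derive the corollary directly from Lemma~\ref{l Weyl variant}, applied to the sequence $\mathbf{x} = \mathbf{u}(x) = (u_n(x))_{n\geq 1}$. The implication $(1)\Rightarrow(2)$ is immediate: if $(u_n(x))_{n\geq 1}$ is u.d.\ mod~$1$, then by the equivalence $(1)\Leftrightarrow(2)$ in Lemma~\ref{l Weyl variant} we have $\lim_{N\to\infty} S_N(h\mathbf{u}(x)) = 0$ for every $h\in\Z\setminus\{0\}$; taking $h=1$ gives $\lim_{N\to\infty} S_N(\mathbf{u}(x)) = 0$, and a fortiori $\lim_{k\to\infty} S_{M_k}(\mathbf{u}(x)) = 0$. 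This holds for every effective Koksma sequence $\mathbf{u}$, which is what $(2)$ asserts.

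For the converse $(2)\Rightarrow(1)$, the key observation is that one must handle the quantifier over $h\in\Z\setminus\{0\}$ that appears in condition (3) of Lemma~\ref{l Weyl variant} but is absent from condition $(2)$ of the corollary. The trick is that if $\mathbf{u}=(u_n)_{n\geq1}$ is an effective Koksma sequence and $h\in\Z\setminus\{0\}$, then $h\mathbf{u} = (hu_n)_{n\geq1}$ is \emph{also} an effective Koksma sequence: multiplying each $u_n$ by the nonzero integer $h$ preserves continuous differentiability, preserves monotonicity of the differences $h u'_m - h u'_n = h(u'_m - u'_n)$, multiplies the lower bound $|u'_m(x)-u'_n(x)|\geq K$ by $|h|\geq 1$ (so the $|h|K$-Koksma condition holds), and preserves computability of both $(hu_n)$ and $(hu'_n)$. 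Hence, assuming $(2)$, for every effective Koksma $\mathbf{u}$ and every $h\in\Z\setminus\{0\}$ we may apply $(2)$ to $h\mathbf{u}$ to get $\lim_{k\to\infty} S_{M_k}(h\mathbf{u}(x)) = \lim_{k\to\infty} S_{M_k}((h\mathbf{u})(x)) = 0$. Together with the hypothesis $M_{k+1}-M_k = o(M_k)$, this is exactly condition (3) of Lemma~\ref{l Weyl variant} for the sequence $\mathbf{x} = \mathbf{u}(x)$, so condition (1) of that lemma holds: $(u_n(x))_{n\geq1}$ is u.d.\ mod~$1$. Since $\mathbf{u}$ was an arbitrary effective Koksma sequence, this establishes $(1)$ of the corollary.

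The main point to get right — though it is not really an obstacle — is the bookkeeping in the previous paragraph: that the class of effective Koksma sequences is closed under multiplication by a nonzero integer, and in particular that the effective Koksma condition requires computability of $(u_n)$ \emph{and} of $(u'_n)$, both of which are clearly preserved under scaling by an integer constant. One should also note the harmless identity $S_{M_k}(h\,\mathbf{u}(x)) = S_{M_k}((h\mathbf{u})(x))$, i.e.\ that scaling the sequence of values by $h$ is the same as evaluating the scaled sequence of functions at $x$; this is what allows condition $(2)$, which only mentions $S_{M_k}(\mathbf{v}(x))$ for effective Koksma $\mathbf{v}$, to deliver the full family of exponential sums indexed by $h$ that Lemma~\ref{l Weyl variant}(3) demands.
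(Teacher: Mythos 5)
Your proof is correct and follows essentially the same route as the paper's: both deduce the corollary from the equivalence of conditions (1) and (3) in Lemma~\ref{l Weyl variant}, absorbing the quantifier over $h\in\Z\setminus\{0\}$ into the quantifier over effective Koksma sequences via the observation that $h\mathbf{u}$ is again effective Koksma for $h\neq0$. Your write-up merely spells out the closure verification that the paper leaves implicit.
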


\begin{proof}
Conditions 1 and 2 come from 1 and 3  in Lemma~\ref{l Weyl variant} 
where $h$ is removed. 
This is justified because  if $\mathbf{u}$ is an effective Koksma sequence
so is $h\mathbf{u}$ for~$h\neq0$.
\end{proof}

\begin{lemma}[Around Koksma's General Metric Theorem]\label{l bound int SN2}
For every integer $N\geq3$, for every $K>0$, 
for every integer $h\neq0$,
for every $K$-Koksma sequence $\mathbf{u} = (u_n)_{n\geq1}$,
denoting $h\mathbf{u}(x)$ the sequence of reals $(hu_n(x))_{n\in\N}$,
\begin{align}\label{bound int SN2}
\int_0^1 |S_N(h\mathbf{u}(x))|^2 dx 
&<  \dfrac{1}{N}  + \dfrac{8}{|h|K} \dfrac{\ln(3N)}{N}
< \left(1+\dfrac{17}{|h|K}\right) \dfrac{\ln(N)}{N}
\end{align}
\end{lemma}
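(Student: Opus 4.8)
The plan is to expand the square $|S_N(h\mathbf{u}(x))|^2$ and integrate term by term, reducing everything to estimates on the oscillatory integrals $\int_0^1 e(h(u_m(x)-u_n(x)))\,dx$ for $m\neq n$. Writing $S_N(h\mathbf{u}(x)) = \frac1N\sum_{j=1}^N e(hu_j(x))$, we get
\[
\int_0^1 |S_N(h\mathbf{u}(x))|^2\,dx = \frac{1}{N^2}\sum_{m=1}^N\sum_{n=1}^N \int_0^1 e\big(h(u_m(x)-u_n(x))\big)\,dx.
\]
The $N$ diagonal terms ($m=n$) each contribute exactly $1/N^2$, giving the leading $1/N$. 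For the off-diagonal terms I would invoke the classical van der Corput lemma (second derivative test / the lemma behind Koksma's theorem): if $\phi$ is continuously differentiable on $[0,1]$ with $\phi'$ monotone and $|\phi'(x)|\geq \Lambda$ throughout, then $\left|\int_0^1 e(\phi(x))\,dx\right| \leq \frac{1}{\pi\Lambda}$ (or a similar explicit constant). Here $\phi(x) = h(u_m(x)-u_n(x))$, so $\phi'(x) = h(u_m'(x)-u_n'(x))$, which by the $K$-Koksma hypothesis is monotone and has absolute value $\geq |h|K$. Hence each off-diagonal integral is bounded by $\frac{C}{|h|K|m-n|}$ — wait, more carefully: the bound is $\frac{C}{|h|K}$ uniformly, but to get the $\ln N$ we need the sharper dependence. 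Actually the standard argument gives $\left|\int_0^1 e(h(u_m - u_n))\right| \leq \frac{4}{|h|K|m-n|}$ using that $u_m' - u_n'$ is monotone and $\geq (|m-n|)K$ in magnitude when the $u_j'$ are "spread out" — but the Koksma hypothesis only gives $\geq K$ for the gap between consecutive indices. The right tool is: for $m\neq n$, $|u_m'(x)-u_n'(x)|\geq |m-n|K$ is \emph{not} assumed; only $\geq K$ is. So the van der Corput bound gives $\leq \frac{c}{|h|K}$ for each of the $N^2-N$ terms, which is too weak.

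The resolution — and the main technical point — is to group the off-diagonal pairs by the value $m-n = d$ and use that for \emph{fixed} $x$, the numbers $u_1'(x), u_2'(x),\dots$ are $K$-separated, so the second-derivative/monotonicity estimate can be summed: $\sum_{n} \left|\int_0^1 e(h(u_{n+d}(x)-u_n(x)))\,dx\right|$ telescopes against the separation to yield something like $\frac{c}{|h|K}\log N$ rather than $\frac{cN}{|h|K}$. Concretely, I would fix $d\geq1$ and estimate $\sum_{n=1}^{N-d}\int_0^1 e(h(u_{n+d}(x)-u_n(x)))\,dx$: since $\{u_{n+d}'(x)-u_n'(x)\}_n$ are themselves monotone in $x$ and have pairwise gaps bounded below in a way that lets the van der Corput bounds sum to $O(1/(|h|K))$ per difference-class $d$; summing over $d=1,\dots,N-1$ and the symmetric $d<0$ contributes the factor $\log N$. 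This is exactly the mechanism in the classical proof of Koksma's General Metric Theorem as presented in Kuipers–Niederreiter, Theorem 4.3 of Chapter 1, so I would cite and adapt that argument, being careful to track the explicit constants ($8$, then $17$) claimed in the statement.

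Assembling: $\int_0^1|S_N|^2\,dx = \frac1N + \frac{2}{N^2}\,\mathrm{Re}\sum_{1\leq n<m\leq N}\int_0^1 e(h(u_m(x)-u_n(x)))\,dx$, and the double sum is bounded in absolute value by $\frac{c}{|h|K}\sum_{d=1}^{N-1}(N-d)\cdot\frac{1}{d}\cdot\frac1N$-type expression — here I must be careful: the per-class bound is $O(1/(|h|K))$ total (not per term), uniform in $d$, so $\sum_{d=1}^{N-1} O(1/(|h|K)) = O(N/(|h|K))$, again too weak unless the per-class bound decays in $d$. The honest statement from Kuipers–Niederreiter is that $\int_0^1|S_N(h\mathbf u(x))|^2dx \leq \frac1N + \frac{C}{|h|K}\cdot\frac{\log N}{N}$ precisely because the relevant double sum is $O\!\big(\frac{N\log N}{|h|K}\big)$, coming from $\sum_{d}\frac{N}{d}$-behavior after the van der Corput estimates are applied with the sharpened separation $|u_{n+d}'(x)-u_n'(x)|\geq dK$ — and \emph{this inequality does follow} from the Koksma axioms: $u_{n+d}' - u_n' = \sum_{j=n}^{n+d-1}(u_{j+1}'-u_j')$, and while the individual differences need not have the same sign, one uses instead that the map $j\mapsto u_j'(x)$ is strictly monotone (consecutive differences $\geq K$ in absolute value and, crucially, of constant sign because $u_m'-u_n'$ monotone in $x$ forces a consistent ordering) — hence $|u_{n+d}'(x)-u_n'(x)|\geq dK$ indeed. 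With that, the van der Corput bound per pair is $\leq \frac{4}{\pi|h|Kd}$, so $\sum_{1\leq n<m\leq N}\leq \sum_{d=1}^{N-1}(N-d)\frac{4}{\pi|h|Kd} \leq \frac{4N}{\pi|h|K}\sum_{d=1}^{N-1}\frac1d \leq \frac{4N\ln(eN)}{\pi|h|K}$, and dividing by $N^2/2$ gives the $\frac{8}{|h|K}\frac{\ln(3N)}{N}$ term after absorbing constants; the final inequality $\frac1N + \frac{8}{|h|K}\frac{\ln(3N)}{N} < (1+\frac{17}{|h|K})\frac{\ln N}{N}$ for $N\geq3$ is a routine numerical check ($\ln(3N)=\ln3+\ln N < 2.1\ln N$ for $N\geq3$, so $8\ln(3N) < 17\ln N$, and $\frac1N < \frac{\ln N}{N}$ since $\ln N>1$). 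The main obstacle is pinning down the van der Corput constant and verifying that the Koksma axioms genuinely deliver the $d$-linear separation $|u_{n+d}'(x)-u_n'(x)|\geq dK$ with a consistent sign; once that is in hand the rest is bookkeeping.
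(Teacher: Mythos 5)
Your overall strategy (expand $|S_N|^2$, isolate the diagonal $1/N$, bound each off-diagonal oscillatory integral by a van der Corput estimate with a separation that grows linearly in the ``distance'' between indices, then sum $\sum_d (N-d)/d$ to get the $\ln N$) is indeed the mechanism behind Koksma's metric theorem. Note, though, that the paper does not reprove any of this: its proof of the lemma is a one-line citation --- the first inequality \emph{is} the last displayed inequality in the Kuipers--Niederreiter proof of Theorem 4.3, and the second follows from $8\ln 3<9$. So you are taking a genuinely more self-contained route, and the burden of getting the explicit constants right falls on you.

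There is a genuine gap in the key step. You assert that the Koksma axioms deliver $|u_{n+d}'(x)-u_n'(x)|\geq dK$ because ``the map $j\mapsto u_j'(x)$ is strictly monotone,'' the consecutive differences having ``constant sign.'' This is false. The axioms guarantee that each individual difference $u_m'-u_n'$ is continuous and bounded away from $0$, hence of constant sign \emph{as a function of $x$}; they do not force the signs of $u_{2}'-u_{1}'$ and $u_{3}'-u_{2}'$ to agree with each other. Concretely, $u_1(x)=0$, $u_2(x)=10x$, $u_3(x)=x$ is a $1$-Koksma sequence (all pairwise derivative differences are constant and of modulus $\geq 1$), yet $|u_3'-u_1'|=1$, not $2$; so $j\mapsto u_j'(x)$ is not monotone in $j$ and your claimed inequality fails for $n=1$, $d=2$. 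The correct version of the idea is the one in Kuipers--Niederreiter: for any fixed $x$ the $N$ numbers $u_1'(x),\dots,u_N'(x)$ are pairwise $K$-separated, so after sorting them the $i$-th and $j$-th smallest differ by at least $|i-j|K$; since each pairwise difference has constant sign in $x$, this sorting permutation $\sigma$ is the \emph{same} for every $x$, and one reindexes by $\sigma$. The double sum over unordered pairs is invariant under this reindexing, so grouping by rank difference $|i-j|$ (rather than by index difference $|m-n|$) yields $\sum_{m\neq n}\frac{1}{|h|K\,r(m,n)}\leq \frac{2N}{|h|K}\sum_{d=1}^{N-1}\frac1d$ and the argument proceeds as you intended. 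With that repair, plus a pinned-down van der Corput constant (you leave it as ``$\frac{1}{\pi\Lambda}$ or a similar explicit constant,'' which is not enough to certify the explicit $8$ and $\ln(3N)$ in the statement), your derivation would close; your final numerical step $8\ln(3N)\leq 16\ln N<17\ln N$ and $1/N<\ln N/N$ for $N\geq 3$ is correct.
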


\begin{proof}
The last inequality of the proof of Koksma's General Metric Theorem 
in~\cite[Theorem 4.3, Chapter~1, p.34-35]{KuipersNiederreiter2006} is the first above inequality. 
Then observe that $8\ln(3)<9$.
\end{proof}

Inequality~\eqref{bound int SN2} immediately implies
the inequality in the corollary below.

\begin{corollary}\label{cor bound mu SN}
Let $\mathbf{u}$ be a $K$-Koksma sequence.
Then, for $\varepsilon>0$,
\begin{align*}
\lambda(\{x\in[0,1] \mid |S_{N}(h\mathbf{u}(x))| \geq \varepsilon\}) 
< \dfrac{1}{\varepsilon^2} \left(1+\dfrac{17}{|h|K}\right) \dfrac{\ln(N)}{N}.
\end{align*}
\end{corollary}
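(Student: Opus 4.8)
The plan is to derive Corollary~\ref{cor bound mu SN} directly from Lemma~\ref{l bound int SN2} by a single application of Chebyshev's (Markov's) inequality applied to the nonnegative random variable $x \mapsto |S_N(h\mathbf{u}(x))|^2$ on the probability space $([0,1],\lambda)$. Concretely, the event $\{x\in[0,1] \mid |S_N(h\mathbf{u}(x))| \geq \varepsilon\}$ coincides with $\{x\in[0,1] \mid |S_N(h\mathbf{u}(x))|^2 \geq \varepsilon^2\}$, so Markov's inequality gives
\[
\lambda(\{x\in[0,1] \mid |S_N(h\mathbf{u}(x))| \geq \varepsilon\}) \leq \dfrac{1}{\varepsilon^2}\int_0^1 |S_N(h\mathbf{u}(x))|^2\,dx,
\]
and then I would substitute the upper bound $\left(1+\frac{17}{|h|K}\right)\frac{\ln(N)}{N}$ from the second inequality of~\eqref{bound int SN2}, noting that $S_N$ is a finite sum of continuous functions so the integrand is measurable and the integral makes sense.

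First I would observe that $\mathbf{u}$ being $K$-Koksma is exactly the hypothesis needed to invoke Lemma~\ref{l bound int SN2}; the only mild point to check is the range of $N$: Lemma~\ref{l bound int SN2} is stated for $N\geq 3$, whereas the corollary as phrased does not restrict $N$. For $N\in\{1,2\}$ one can either note that $\ln(N)/N$ is not a useful bound (indeed $\ln 1 = 0$) and the statement should be read as holding for $N\geq 3$, or simply absorb these finitely many cases into the constant; I expect the intended reading is $N\geq 3$, inherited silently from the lemma, so I would not belabor this. The implication is explicitly flagged in the text (``immediately implies''), so the proof is genuinely a one-line deduction.

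The only step that requires any thought at all is confirming that Markov's inequality applies, i.e. that the map in question is a bona fide nonnegative measurable function with the stated integral as an upper bound — but this is immediate since $|S_N(h\mathbf{u}(x))|^2$ is continuous in $x$ (each $u_n$ is continuously differentiable, hence continuous, and $e(\cdot)$ is continuous), hence Borel measurable, and its integral is controlled by~\eqref{bound int SN2}. I anticipate no real obstacle: the work was already done in Lemma~\ref{l bound int SN2}, whose proof extracts the key estimate from the classical proof of Koksma's General Metric Theorem in~\cite{KuipersNiederreiter2006}. Thus the proof I would write is essentially: ``Apply Markov's inequality to $|S_N(h\mathbf{u}(x))|^2$ and use Lemma~\ref{l bound int SN2}.''
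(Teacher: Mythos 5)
Your proposal is correct and is exactly the argument the paper intends: the corollary is stated as an immediate consequence of Lemma~\ref{l bound int SN2}, obtained by applying Chebyshev's (Markov's) inequality to the nonnegative function $x\mapsto|S_N(h\mathbf{u}(x))|^2$. Your remark about the implicit restriction to $N\geq3$ inherited from the lemma is a fair observation but does not change the substance.
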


%
%%%%%%%%%%%%%%%%%%%%%%%%%%%%%%%
%%%%%%%%%%%%%%%%%%%%%%%%%%%%%%%
\subsection{Solovay tests for randomness}
%%%%%%%%%%%%%%%%%%%%%%%%%%%%%%%
%%%%%%%%%%%%%%%%%%%%%%%%%%%%%%%

We consider  the notions of Solovay randomness and total Solovay 
randomness, see~\cite{Solovay1975,DowneyHirschfeldt2010}.
A Solovay test is a computable sequence
$(V_n)_{n\geq 1}$ of computably enumerable
open subsets of  real numbers such that
$\sum_{n\geq 1}\lambda(V_n)$ is finite.
A real~$x$ passes the test if it belongs just to finitely many $V_n$'s.
A real~$x$ is Solovay random if it passes every Solovay test.
Requiring that $\sum_{n\geq1}\lambda(V_n)$ be a computable real,
we get the notions of total Solovay test and total Solovay randomness.

\begin{proposition}[\protect{\cite[Proposition 3.2.19]{Nies2009} and
\cite[Theorem 7.1.10]{DowneyHirschfeldt2010}}]
\label{p Solovay random}
A real passes all Solovay tests if and only if it passes all Martin-L\"of tests.
A real passes all total Solovay tests if and only if it passes all Schnorr tests.
Thus, Solovay and Martin-L\"of randomness coincide 
and total Solovay and Schnorr randomness also coincide.
\end{proposition}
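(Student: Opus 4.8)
The plan is to prove the two ``if and only if'' statements, after which the two ``coincide'' conclusions are immediate reformulations. I would argue by contrapositive, so let me say that a real $x$ \emph{fails} a Solovay test $(V_n)_{n\geq1}$ if $x\in V_n$ for infinitely many $n$, and \emph{fails} a Martin-L\"of test $(U_n)_{n\geq1}$ if $x\in\bigcap_n U_n$; it then suffices to show, both for the plain notions and for the total/Schnorr notions, that $x$ fails some Martin-L\"of test if and only if $x$ fails some Solovay test. The implication from a failed Martin-L\"of test to a failed Solovay test is the easy one. If $x\in\bigcap_n U_n$ with $\lambda(U_n)\leq f(n)$ for a computable $f$ with $\lim_n f(n)=0$, then searching through the values of $f$ one computably finds $n_1<n_2<\cdots$ with $f(n_j)\leq 2^{-j}$ (the search terminates because $f$ tends to $0$); the family $(U_{n_j})_{j\geq1}$ is a computable sequence of computably enumerable open sets with $\sum_j\lambda(U_{n_j})\leq 1$, hence a Solovay test, and $x$ lies in every $U_{n_j}$ since the $U_n$ decrease. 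When $(U_n)$ is a Schnorr test, $(\lambda(U_{n_j}))_j$ is a computable sequence of reals and $\sum_j\lambda(U_{n_j})$ is a computable real (its $J$-th partial sum is computable and its tail is at most $2^{-J}$), so $(U_{n_j})$ is a total Solovay test.

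For the reverse implication, suppose $x$ fails a Solovay test $(V_n)_{n\geq1}$ and set $S=\sum_{n\geq1}\lambda(V_n)<\infty$. For each $k\geq1$ let $R_k$ be the set of reals lying in at least $k$ of the sets $V_n$, that is $R_k=\bigcup_{n_1<\cdots<n_k}(V_{n_1}\cap\cdots\cap V_{n_k})$; being a countable union of finite intersections of the $V_n$, it is computably enumerable open, uniformly in $k$, and clearly $R_{k+1}\subseteq R_k$. Writing $g(y)=\#\{n:y\in V_n\}\in\N\cup\{\infty\}$ we have $\int g\,d\lambda=\sum_n\lambda(V_n)=S$, so Markov's inequality gives $\lambda(R_k)=\lambda(\{g\geq k\})\leq S/k$; since $S$ is a fixed finite real, the function $k\mapsto\lceil S\rceil/k$ is computable, tends to $0$ and bounds $\lambda(R_k)$, so $(R_k)_{k\geq1}$ is a Martin-L\"of test. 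As $x$ lies in infinitely many $V_n$ we have $g(x)=\infty$, hence $x\in R_k$ for every $k$ and $x$ fails this test. For the Schnorr case assume in addition that $S$ is computable. From $g=\sum_{k\geq1}\mathbf{1}_{R_k}$ we get $\sum_{k\geq1}\lambda(R_k)=S$; for each $K$ the partial sum $\sum_{k\leq K}\lambda(R_k)$ and the tail $\sum_{k>K}\lambda(R_k)$ are lower semicomputable and sum to the computable real $S$, so each of them is computable, uniformly in $K$, and therefore $\lambda(R_K)$, as their difference, is computable uniformly in $K$. Hence $(R_k)_{k\geq1}$ is a Schnorr test, and $x$ fails it.

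Most of this is bookkeeping with the definitions; the point that needs some care is the last one, turning $\lambda(R_k)$, which is a priori only lower semicomputable, into a genuinely computable real in the Schnorr case, which uses both the identity $\sum_k\lambda(R_k)=S$ and the computability of $S$. The other mild subtlety, on the Martin-L\"of side, is that one cannot in general compute how fast $\sum_n\lambda(V_n)$ converges; this is exactly why it is convenient to work with the sets $R_k=\{g\geq k\}$, whose measures come with the ready-made bound $S/k$, rather than with the tails $\bigcup_{n\geq N}V_n$.
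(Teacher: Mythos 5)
Your argument is correct and is essentially the standard proof from the cited references: the paper itself gives no proof of this proposition, deferring to \cite[Proposition 3.2.19]{Nies2009} and \cite[Theorem 7.1.10]{DowneyHirschfeldt2010}, and your construction (passing to a subsequence with $\lambda(U_{n_j})\leq 2^{-j}$ in one direction, and using the sets $R_k$ of points covered at least $k$ times together with Markov's inequality in the other, with the ``sum of two left-c.e.\ reals equal to a computable real'' trick for the Schnorr case) is exactly the argument found there. No issues to report.
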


%
%%%%%%%%%%%%%%%%%%%%%%%%%%%%%%%
%%%%%%%%%%%%%%%%%%%%%%%%%%%%%%%
\subsection{Proof of Theorem~\ref{thm:1} for Martin-L\"of randomness}
%%%%%%%%%%%%%%%%%%%%%%%%%%%%%%%
%%%%%%%%%%%%%%%%%%%%%%%%%%%%%%%
%
For clarity of exposition, we first prove the weak form of Theorem~1 which states that every Martin-L\"of random real is effective Koksma u.d.
We argue by contradiction.
Let $(u_n)_{n\geq 1 }$ be an effective Koksma sequence and assume the real $x\in[0,1]$ is such that 
the sequence $(u_n(x))_{n\geq 1}$ is not u.d. mod~$1$.
We show that $x$ is not Schnorr random by constructing a total Solovay test failed by $x$ (cf. Proposition~\ref{p Solovay random}).

The argument extends in a simple way Avigad's argument in~\cite{Avigad2013}
using Corollary~\ref{cor bound mu SN}.
For $n$ and $h$, let 
\[
S_{N,h}(\mathbf{u}(t))=\frac{1}{N}\sum_{j=1}^{N}e(h\, u_j(t)).
\]
Letting $M_k=k^2$, we have $M_{k+1}-M_k=2k+1=o(M_k)$.
Applying Lemma~\ref{l Weyl variant}, since we assumed that 
$(u_n(x))_{n\geq 1}$ is not u.d. mod~$1$,
there exists an integer $h\neq0$,
such that $S_{k^2,h}(x)$ does not tend to $0$ when $k$ goes to infinity.
Thus, there exists a strictly positive rational $\varepsilon$ 
and infinitely many $k$'s such that $|S_{k^2,h}(x) |>\varepsilon$.
Let 
\begin{align}\label{eq Akvarepsilon}
A_k^{\varepsilon} = \{t\in[0,1]: |S_{k^2,h}(\mathbf{u}(t))|>\varepsilon\}.
\end{align}
Thus, $x$ belongs to infinitely many $A_k^{\varepsilon}$'s.

Applying Corollary~\ref{cor bound mu SN} with $N=k^{2}$, we get a factor $2$ from $\ln(k^{2})$
hence,
\begin{align}\label{eq mu Akvarepsilon}
\measure{A_k^{\varepsilon}} 
< \dfrac{2}{\varepsilon^2}  \left(1+\dfrac{17}{|h|K}\right) \dfrac{\ln(k)}{k^{2}}
\end{align}
As a consequence, since the series $\sum_{k\geq1}\ln(k)/k^2$ converges
so does the series 
\linebreak
$\sum_{k\geq1}\measure{A_k^{\varepsilon}}$.
It remains to check that  the sets $A_k^{\varepsilon}$ are $\Sigma^0_1$ uniformly in $k$. 
This is immediate from the fact that the sequence $(u_j)_{j\in\N}$ is computable.
Hence, the sequence $(A_k^{\varepsilon})_{k\geq1}$ is a Solovay test.
Now, since $x$ belongs to infinitely many $A_k$'s, it fails this Solovay test.
We conclude that  $x$ is not Martin-L\"of random.

Observe that the left cut of the real $\sum_{k\geq1}\measure{A_k^{\varepsilon}}$ (the set of rational numbers less than this sum) is computably enumerable but not necessarily computable
so that $(A_k^{\varepsilon})_{k\geq1}$ may not be a total Solovay test.
\hfill $\qed$
%
%%%%%%%%%%%%%%%%%%%%%%%%%%%%%%%
%%%%%%%%%%%%%%%%%%%%%%%%%%%%%%%
\subsection{Proof of Theorem~\ref{thm:1} for Schnorr randomness}
%%%%%%%%%%%%%%%%%%%%%%%%%%%%%%%
%%%%%%%%%%%%%%%%%%%%%%%%%%%%%%%
%
To contradict Schnorr randomness requires some addition to the previous argument.
We shall modify the open sets $A_k^{\varepsilon}$ to open sets $B_k^{\varepsilon}$
which are finite unions of rational intervals.
This will allow to get the computability 
of the sum $\sum_{n\geq1} \lambda(B_k^{\varepsilon})$, 
ensuring that $(B_k^{\varepsilon})_{k\geq1}$ is a total
Solovay test failed by $x$ hence that $x$ is not Schnorr random.

We shall consider both sets $A_k^\varepsilon$ and $A_k^{\varepsilon/2}$, 
cf. formula~\eqref{eq Akvarepsilon}.
Let $\mathbf{u'} = (u'_n)_{n\geq1}$ be the sequence of derivatives of the $u_n$'s
and $\theta_k = \sup_{t\in[0,1]} |S_{k^2,h}(\mathbf{u'}(t))|$.
Since the sequence of derivatives $(u'_n)_{n\geq1}$ is computable, 
the same holds for the sequence $(|S_{k^2,h}(\mathbf{u'})|)_{k\geq1}$,
from which we get the computability 
of the sequence of reals $(\theta_k)_{k\geq1}$.
Computing rational approximations of the $\theta_k$'s up to $1$, we can define
a computable function $p \colon \N\setminus\{0\}\to\N$ such that $p(k)>\theta_k$.
By the mean value theorem, we get
\begin{align}\label{eq function p}
|S_{k^2,h}(\mathbf{u})(s) - S_{k^2,h}(\mathbf{u})(t)| 
\leq \theta_k |s-t| < p(k) |s-t|.
\end{align}
Define computable functions $a \colon \N\to\N$ 
and $q \colon \N\times \N \to \Q$ such that
\begin{align}
\label{eq a(k)}
&\text{for $k\geq1$}, &&2^{-a(k)}<\varepsilon/8p(k)&
\\
\label{eq q(k,i)}
&\text{for $i\leq2^{a(k)}$},&&
q(k,i)\in\Q \text{ approximates }
|S_{k^2,h}(\mathbf{u})(i 2^{-a(k)})|
\text{ up to } \varepsilon/8&
\end{align}
For each $k$, let 
\begin{align*}
X_k &= \{i \leq 2^{a(k)} \mid q(k,i)>(3/4)\varepsilon\}
\\
B_k& = [0,1] \cap \bigcup_{i\in X_k} ((i-1)2^{-a(k)}, (i+1) 2^{-a(k)}).
\end{align*}
If $t\in B_k$, say $t\in((i-1) 2^{-a(k)}, (i+1) 2^{-a(k)})$ with $i\in X_k$, 
then, applying successive inequalities
\eqref{eq function p},~\eqref{eq a(k)}, condition ~\eqref{eq q(k,i)}
and the definition of $X_k$, we get
\begin{align*}
|S_{k^2,h}(\mathbf{u})(t) | 
&\geq |S_{k^2,h}(\mathbf{u})(i 2^{-a(k)})| - p(k)2^{-a(k)}\\
&\geq \left(q(k,i) - \dfrac{\varepsilon}{8}\right) - \dfrac{\varepsilon}{8}\\
&> \dfrac{3\varepsilon}{4} - \dfrac{\varepsilon}{4} = \dfrac{\varepsilon}{2}.
\end{align*}
This proves inclusion $B_k \subseteq A_k^{\varepsilon/2}$.

If $t\in A_k^\varepsilon$, say $t\in[i 2^{-a(k)}, (i+1) 2^{-a(k)}]$,
then, again applying~\eqref{eq q(k,i)},~\eqref{eq function p}, 
recalling what means $t\in A_k^\varepsilon$, 
and applying inequality~\eqref{eq a(k)}, we get
\begin{eqnarray*}
q(k,i) &\geq & | S_{k^2,h}(\mathbf{u})(i 2^{-a(k)}) | - \varepsilon/8\\
        &\geq  &  (|S_{k^2,h}(\mathbf{u})(t)| - p(k)2^{-a(k)}) - \varepsilon/8 \\
          &>    & (\varepsilon - \varepsilon/8) - \varepsilon/8 = (3/4)\varepsilon
\end{eqnarray*}
hence, $i\in X_k$ and $t\in B_k$.
This proves inclusion $A_k^\varepsilon \subseteq B_k$.
Thus, we have $A_k^\varepsilon \subseteq B_k\subseteq A_k^{\varepsilon/2}$.
The first inclusion $A_k^\varepsilon \subseteq B_k$ ensures that the real $x$,
lying in infinitely many $A_k^\varepsilon$'s, also lies in infinitely many $B_k$'s.
The second inclusion and inequality~\eqref{eq mu Akvarepsilon} 
(applied with $\varepsilon/2$)
ensures that
\begin{align}\label{eq Schnorr mu Bk}
\lambda(B_k) \leq \lambda(A_k^{\varepsilon/2}) &<\alpha \dfrac{\ln(k)}{k^{2}}
\qquad\text{where $\alpha = \dfrac{8}{\varepsilon^2}  \left(1+\dfrac{17}{|h|K}\right)$}
\end{align}
hence, the series $\sum_{k\geq1} \lambda(B_k)$ converges.

To see that the sum $\sum_{k\geq1} \lambda(B_k)$ is computable, 
observe that, for every $L\geq2$,
\begin{enumerate}
\item[-]
the measure $\sum_{1\leq k\leq L}\lambda(B_k)$ is rational and can be uniformly computed from $L$ since $B_k$ is a finite union of rational intervals indexed by the finite set $X_k$ and the $X_k$'s are uniformly computable in $k$,
\item[-]
given any rational $\delta>0$ one can compute $L$ so that the tail $\sum_{k> L}\lambda(B_k)$ is smaller than $\delta$. Indeed,
applying inequalities~\eqref{eq Schnorr mu Bk} and the fact that $\ln(t) / t^2$ is decreasing for $t\geq2$,
we have, for $L\geq3$, 
\begin{align*}
\sum_{k>L} \lambda(B_k) 
&< \alpha \sum_{k>L} \dfrac{\ln(k)}{k^{2}}
< \alpha  \int_{L}^{+\infty} \dfrac{\ln(t)}{t^{2}} dt
= \alpha \frac{\ln(L)+1}{L}.
\end{align*}
\end{enumerate}
This shows that the family $(B_k)_{k\geq1}$ is a total Solovay test failed by~$x$.
\hfill$\qed$

%%%%%%%%%%%%%%%%%%%%%%%%%%%%%%%
%%%%%%%%%%%%%%%%%%%%%%%%%%%%%%%
%%%%%%%%%%%%%%%%%%%%%%%%%%%%%%%
%%%%%%%%%%%%%%%%%%%%%%%%%%%%%%%
\section{Proof of Theorem~\ref{thm:Sigma01}}\label{s:Sigma01}
%%%%%%%%%%%%%%%%%%%%%%%%%%%%%%%
%%%%%%%%%%%%%%%%%%%%%%%%%%%%%%%
%%%%%%%%%%%%%%%%%%%%%%%%%%%%%%%
%%%%%%%%%%%%%%%%%%%%%%%%%%%%%%%
%
%%%%%%%%%%%%%%%%%%%%%%%%%%%%%%%
%%%%%%%%%%%%%%%%%%%%%%%%%%%%%%%
\subsection{A needed lemma}
%%%%%%%%%%%%%%%%%%%%%%%%%%%%%%%
%%%%%%%%%%%%%%%%%%%%%%%%%%%%%%%

We  need the following result.

\begin{lemma}\label{l:measure frac U}
Let $A$ be a set of real numbers and 
let $\expa{A}=\Big\{ \{z\} :z\in A\Big\}$ be the set of fractional parts of elements of $A$. 

1. If $A$ is $\Sigma^0_1$ then so is $\expa{A}$
and $\measure{\expa{A}}\leq\measure{A}$.

2. If $\measure{A}$ is computable then so is $\measure{\expa{A}}$.
\end{lemma}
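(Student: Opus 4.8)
The plan is to write $[0,1)$-shifted copies of $A$ and reassemble them. Concretely, for each integer $k \in \Z$ let $A_k = (A - k) \cap [0,1)$, the part of the translate $A-k$ that lands in the unit interval; then $\expa{A} = \bigcup_{k\in\Z} A_k$, since a real $z \in A$ contributes $\{z\} = z - \lfloor z\rfloor$, which lies in exactly one $A_k$ (namely $k = \lfloor z\rfloor$). For part~1, if $A = \bigcup_j I_j$ with $(I_j)_j$ a computable sequence of rational open intervals, then each $I_j - k$ is again a rational open interval and $(I_j - k) \cap [0,1)$ is a rational interval (possibly a half-open one at the endpoint $0$, but up to a single point this is an open rational interval, which does not affect openness of the union or its measure); ranging over the computable family of pairs $(j,k)$ with $I_j \cap [k,k+1) \neq \emptyset$ gives a computable enumeration exhibiting $\expa{A}$ as $\Sigma^0_1$. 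The measure bound $\measure{\expa{A}} \leq \measure{A}$ is then immediate from countable subadditivity: $\measure{\expa{A}} \leq \sum_{k} \measure{A_k} = \sum_k \measure{(A-k)\cap[0,1)} = \measure{A}$, the last equality because the sets $(A-k)\cap[0,1)$ for $k\in\Z$ are, after translating back by $k$, a partition of $A$ by the pieces $A \cap [k,k+1)$.

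For part~2, assume $\measure{A}$ is computable. The key point is that one must approximate $\measure{\expa{A}}$ both from below and from above. From below is easy and holds for any $\Sigma^0_1$ set: enumerating finitely many of the defining rational intervals of $\expa{A}$ and computing the measure of their union gives an increasing sequence of rationals converging to $\measure{\expa{A}}$. The work is the upper bound. Here I would use the decomposition above together with the hypothesis: choose, using computability of $\measure{A}$, a finite set $F \subset \Z$ and a rational $\eta > 0$ such that $\measure{A \cap \bigcup_{k\in F}[k,k+1)} > \measure{A} - \eta$; this is possible because $\measure{A} = \sum_{k\in\Z}\measure{A\cap[k,k+1)}$ and a computable approximation to $\measure{A}$ lets us locate a finite sub-sum within $\eta$. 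Then $\measure{\expa{A}} \leq \sum_{k\in F}\measure{A_k} + \eta$, and each $\measure{A_k}$ for $k$ in the finite set $F$ is the measure of a $\Sigma^0_1$ subset of $[0,1)$, which we can approximate from below to within $\eta/\card F$; summing gives a rational exceeding $\measure{\expa{A}}$ by at most $2\eta + (\text{error in the lower approximations})$, which can be made smaller than any prescribed rational tolerance. Combining the lower and upper computable approximations shows $\measure{\expa{A}}$ is computable.

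The main obstacle I anticipate is the bookkeeping in part~2: one needs the finite truncation $F$ to capture almost all of the mass of $A$, and this requires the \emph{computability} of $\measure{A}$ (mere lower semicomputability would not suffice, since one could never certify that the tail $\sum_{k\notin F}\measure{A\cap[k,k+1)}$ is small). A minor technical nuisance is that $(I_j-k)\cap[0,1)$ may fail to be literally an open interval when it touches $0$; this is harmless for measure and for the $\Sigma^0_1$ property (one may replace $[0,\epsilon)$ by $(-\epsilon,\epsilon)$ intersected with the implicit ambient, or simply note a single endpoint is a null $\Sigma^0_1$ set), but it should be remarked on rather than glossed over. Everything else — the partition identity for $\expa{A}$, subadditivity, the uniform computability of the enumeration indexed by pairs $(j,k)$ — is routine.
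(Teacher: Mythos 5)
Your part 1 is correct and essentially matches the paper's argument (reduce each interval mod $1$, take the countable union), and the decomposition $\expa{A}=\bigcup_{k\in\Z}A_k$ with $A_k=(A-k)\cap[0,1)$ together with the partition identity $\sum_k\measure{A_k}=\measure{A}$ is a clean way to get the measure bound. The lower (increasing) approximation of $\measure{\expa{A}}$ in part 2 is also standard. The gap is in your upper approximation. You bound $\measure{\expa{A}}\leq\sum_{k\in F}\measure{A_k}+\eta$ and claim that, after replacing each $\measure{A_k}$ by a lower approximant, summing gives a rational exceeding $\measure{\expa{A}}$ by at most $2\eta$ plus the approximation errors. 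This is false: the sets $A_k$ can overlap heavily inside $[0,1)$, so $\sum_{k\in F}\measure{A_k}$ may exceed $\measure{\bigcup_{k\in F}A_k}$, hence $\measure{\expa{A}}$, by a fixed positive amount that does not shrink as $\eta\to0$ and $F$ grows. For instance, if $A=\bigcup_{j=0}^{10}(j+0.1,\,j+0.9)$ then $\measure{A}=8.8$ is computable and $\expa{A}=(0.1,0.9)$ has measure $0.8$, yet $\sum_{k\in F}\measure{A_k}=8.8$ for $F=\{0,\dots,10\}$; your candidate upper bounds never drop below $8.8$, so the two-sided approximation never closes and computability of $\measure{\expa{A}}$ does not follow.

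The missing idea (and what the paper does) is that the finite-stage upper approximant must be the measure of a \emph{union}, computed exactly with overlaps merged, not a sum of measures of the pieces. Write $A$, up to countably many points, as a disjoint union $\bigcup_{k}I_k$ of intervals with computable endpoints; disjointness gives $\measure{A}=\sum_k\measure{I_k}$, and computability of $\measure{A}$ lets you compute $p_n$ with $\sum_{k>p_n}\measure{I_k}<2^{-n}$. Then $\expa{\bigcup_{k\leq p_n}I_k}$ is a finite union of explicitly given intervals of $[0,1)$ whose Lebesgue measure is exactly computable; it is contained in $\expa{A}$, and $\expa{A}\setminus\expa{\bigcup_{k\leq p_n}I_k}\subseteq\expa{\bigcup_{k>p_n}I_k}$ has measure less than $2^{-n}$ by part 1. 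This single quantity is thus simultaneously a lower approximation and, after adding $2^{-n}$, an upper one. Your observation that genuine computability of $\measure{A}$ (not mere lower semicomputability) is needed to certify that the tail is small is exactly right and is the same mechanism the paper uses; only the finite-stage approximant has to be repaired as above.
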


\begin{proof}
In case $A$ is a computable interval 
$(n+\alpha,p+\beta)$
where $n,p$ are integers, $n\leq p$, and $0\leq\alpha,\beta<1$ are computable,
the result is clear since
\begin{align*}
\expa{(n+\alpha,p+\beta)} =\left\{
\begin{array}{cl}
\protect{(\alpha,\beta)} & \text{ if }  n=p
\\
\protect{[0,\beta) \cup (\alpha,1)} & \text{ if } p=n+1
\\
\protect{[0,1)} & \text{ if }   p\geq n+2
\end{array}\right.
\end{align*}
Taking  a countable union we get item 1.

For item 2, observe that, up to a countable number of points,
a $\Sigma^0_1$ set $A$ is equal to a $\Sigma^0_1$ countable union 
$\bigcup_{k\in\N} I_{k}$ of pairwise disjoint intervals with rational endpoints.
In particular, $\measure{A}=\sum_{k\in\N}\measure{I_{k}}$. 
If $\measure{A}$ is computable then we can compute a function $n\mapsto p_{n}$
such that 
$\sum_{k> p_{n}}\measure{I_{k}} < 2^{-n}$.
Since $\measure{\expa{I_{k}}}\leq\measure{I_{k}}$, we see that
$\measure{\expa{\bigcup_{k>p_{n}} I_{k}}}$
 is a computable (in $n$) approximation of
$\measure{\expa{A}}$ up to~$2^{-n}$.
\end{proof}

%%%%%%%%%%%%%%%%%%%%%%%%%%%%%%%
%%%%%%%%%%%%%%%%%%%%%%%%%%%%%%%
\subsection{Proof of Theorem~\ref{thm:Sigma01} for Martin-L\"of randomness}
%%%%%%%%%%%%%%%%%%%%%%%%%%%%%%%
%%%%%%%%%%%%%%%%%%%%%%%%%%%%%%%
We prove the contrapositive of Theorem~\ref{thm:Sigma01}.
Assume $x\in [0,1]$ is not Martin-L\"of random and let 
$(u_n)_{n\geq 1}$ be a computable sequence of functions $[0,1]\to\R$
which is $(\ell_n)_{n\geq 1}$-Lipschitz,
where $(\ell_n)_{n\geq 1}$ is a computable sequence of rationals.
We construct a $\Sigma^0_1$ set $A$ which witnesses that the sequence
$(u_{n}(x))_{n\geq1}$ is not $\Sigma^0_1$-u.d.

Define
\begin{align*}
P(N,X)&=\frac{1}{N}\#\Big\{q: 1\leq q\leq N, \expa{x}\in X\Big\}.
\end{align*}
We first give the flavor of the proof.
Let $(V_n)_{n\geq 1}$ be a Martin-L\"of test failed by $x$,
that is,  $x\in\bigcap_{n\geq 1}V_n$.
Up to the extraction of a computable subsequence of $(V_n)_{n\geq 1}$,
one can suppose that $\measure{V_n}\leq 2^{-n-3}/\ell_n$, so that by 
$\ell_{n}$-Lipschitzness we have 
$\measure{u_n(V_n)}\leq 2^{-n-3}$.
Consider the set $A=\bigcup_{n\in\N} u_n(V_n)$. Then, 
\[
\measure{A}\leq\sum_{n\in\N}\measure{u_n(V_n)} 
\leq\sum_{n\in\N} 2^{-n-3} = 1/4,
\]
hence also 
\[
\measure{\expa{A}}\leq 1/4.
\]
Since $x$ is in all $V_{n}$'s, for every $n$ we have $\expa{u_n(x)}\in\expa{A}$
hence $P(N,\expa{A})=1$. This shows that the sequence $(u_{n}(x))_{n\geq1}$ is not 
u.d. relative to the set $\expa{A}$.
Unfortunately, the set $\expa{A}$ is not necessarily open. 

Now we slightly modify the $u_n(V_n)$'s to get a $\Sigma^0_1$ set.
Let $(I_{n,k})_{n,k\in\N}$ be a sequence of open intervals 
with rational endpoints such that 
$V_{n}=\bigcup_{k\in\N}I_{n,k}$ for all $n$
and the sequences 
of left and right endpoints
are computable.
Observing that $I_{n,k}\setminus(\bigcup_{p<k}I_{n,p})$ is
 a finite union of pairwise disjoint intervals (possibly not open), we write 
$V_{n}=\bigcup_{k\in\N}J_{n,k}$ where, for each $n$, the sequence
$(J_{n,k})_{k\in\N}$ consists of pairwise disjoint intervals (possibly not open)
such that the sequences
 of left and right endpoints
are computable.

Since the $u_{n}$'s are computable they are continuous and, by the mean value theorem, 
$u_{n}(J_{n,k})$ is an interval with computable endpoints
$\alpha_{k,n}$ and $\beta_{k,n}$ :
\begin{align*}
\alpha_{k,n}&=\min\{u_{n}(t) : t\in J_{n,k}\}
&& \beta_{k,n}=\max\{u_{n}(t) : t\in J_{n,k}\}
\end{align*}
Since $u_{n}$ is $\ell_{n}$-Lipschitz we have 
$\measure{u_{n}(J_{n,k})} \leq \ell \measure{J_{n,k}}$.
Consider the $\Sigma^0_1$ open sets 
\[
\omega_{n,k} = (\alpha_{n,k}-2^{-n-k-5}, \beta_{n,k}+2^{-n-k-5})
\]
and
\[\Omega_{n}=\bigcup_{k\in\N}\omega_{n,k} \ \ \ \text{ and  }\ \  \
\Omega=\bigcup_{n\in\N}\Omega_{n}.
\]
Since $u_{n}(J_{n,k})$ is an interval with endpoints $\alpha_{n,k}, \beta_{n,k}$,
the set $\Omega_{n}$ contains the $u_{n}(J_{n,k})$'s hence contains $u_{n}(V_{n})$. 
Also, 
\begin{align*}
\Omega_{n} &= u_{n}(V_{n}) \cup \bigcup_{k\in\N} 
(\alpha_{n,k}-2^{-n-k-5}, \alpha_{n,k}]  \cup [\beta_{n,k}, \beta_{n,k}+2^{-n-k-5})
& 
\\
\measure{\Omega_{n}} &\leq \measure{u_{n}(V_{n})} + 2\times\sum_{k\in\N}2^{-n-k- 5}
\leq 2^{-n-3}+2^{-n-3}=2^{-n-2}\\
 \measure{\Omega}&\leq 1/2.
\end{align*}
Thus,  $\Omega$ is a $\Sigma^0_1$ set and by Lemma~\ref{l:measure frac U},
 $\expa{\Omega}$ is also a $\Sigma^0_1$ set. 
 The  measure of $\expa{\Omega}$ is at most $1/2$
and  $P(N,\expa{\Omega})=1$ because 
 for every $n$ we have $x\in V_{n}$,
so 
$u_{n}(x)\in u_{n}(V_{n})\subseteq \Omega_{n}\subseteq \Omega$.
Thus, $(u_{n}(x))_{n\geq1}$ is not 
u.d. relative to the $\Sigma^0_1$ set~$\expa{\Omega}$.
\hfill{\qed}

%%%%%%%%%%%%%%%%%%%%%%%%%%%%%%%
%%%%%%%%%%%%%%%%%%%%%%%%%%%%%%%
\subsection{Proof of Theorem~\ref{thm:Sigma01} for Schnorr randomness}
%%%%%%%%%%%%%%%%%%%%%%%%%%%%%%%
%%%%%%%%%%%%%%%%%%%%%%%%%%%%%%%
In case $x$ is not Schnorr then in the above argument we have the extra condition that the sequence $(\measure{V_{n}})_{n\in\N}$ is computable.
We show how to computably approximate $\measure{\Omega}$ 
up to any given $\varepsilon>0$.
Let $p$ be such that $2^{-p}\leq\varepsilon$.

{\em Approximation $1$.}
The set $\Omega=\bigcup_{n\in\N}\Omega_{n}$ is approximated by 
$\bigcup_{n<p}\Omega_{n}$ up to measure $2^{-p-3}$ since
\begin{align*}
\measure{\bigcup_{n\geq p}\Omega_{n}}
&\leq\measure{\bigcup_{n\geq p}u_{n}(V_{n})}
+\sum_{n\geq p,\ k\in\N}2\times 2^{-n-k-5}
\\
&\leq\big(\sum_{n\geq p}2^{-n-3}\big)+ 2^{-p-3} 
\\
&< 2^{-p-1}.
\end{align*}

{\em Approximation $2$.}
Observe that 
for each $n$,
\\
- $V_{n}=\bigcup_{k\in\N}J_{n,k}$ is the disjoint union of the intervals
 $J_{n,k}$'s for $k\in\N$,
\\
 - the sequence of their rational endpoints is computable, and 
\\
- the measure of $V_{n}$
is computable uniformly in $n$.  
\\
Then, we can compute $q$ such that, for each $n<p$, the set 
$\bigcup_{k<q}J_{n,k}$ approximates~$V_{n}$ up to measure
$2^{-n-p-3}/\ell_{n}$.
We can also assume $q\geq p$.
By $\ell_{n}$-Lipschitzness, for each $n<p$, 
the set $\bigcup_{k<q}u_{n}(J_{n,k})$ approximates 
$u_{n}(V_{n})$ up to measure $2^{-n-p-3}$.
\linebreak
Finally, the set $\bigcup_{n<p,\ k<q}u_{n}(J_{n,k})$ approximates
$\bigcup_{n<p}u_{n}(J_{n,k})$ up to measure 
\linebreak
$\sum_{n<p}2^{-n-p-3} < 2^{-p-2}$.
\medskip

{\em Approximation $3$.}
For each $n$ and $k$ let 
\[
\rho_{n,k}
=(\alpha_{n,k}-2^{-n-k-5}, \alpha_{n,k}]  \cup [\beta_{n,k}, \beta_{n,k}+2^{-n-k-5}).
\]
The set $\bigcup_{k<q} \rho_{n,k}$ approximates $\bigcup_{k\in\N} \rho_{n,k}$
up to measure $2^{-n-q-3}$ because 
\begin{align*}
\measure{\bigcup_{k\geq q} \rho_{n,k}} 
&\leq \sum_{k\geq q} 2^{-n-k-4} = 2^{-n-q-3}
\end{align*}
Hence $\bigcup_{n< p,\ k<q} \rho_{n,k}$ approximates 
$\bigcup_{n<p,\ k\in\N} \rho_{n,k}$ up to measure 
$\sum_{n<p}2^{-n-q-3} < 2^{-q-2}$.
\medskip

{\em Conclusion.} 
The measure of the set 
\[
Z=\big(\bigcup_{n<p,\ k<q}u_{n}(J_{n,k})\big) \cup \big(\bigcup_{n< p,\ k<q} \rho_{n,k}\big)
\]
which is made of at most $3pq$ many intervals,
 is  computable.
Using 
Approximations~$2$ and~$3$, we see that the set $Z$ approximates 
\[
\bigcup_{n<p}\Omega_{n} 
= \big(\bigcup_{n<p,\ k\in\N}u_{n}(J_{n,k})\big) \cup \ \big(\bigcup_{n< p,\ k\in\N} \rho_{n,k}\big)
\]
 up to measure $2^{-p-2}+  2^{-q-2} \leq  2^{-p-1}$ (since $q\geq p$).
Using 
Approximation $1$, the set $\bigcup_{n<p}\Omega_{n}$ approximates $\Omega$
up to measure $2^{-p-1}$.
Thus, $Z$ approximates $\Omega$ up to measure $2^{-p}$ hence 
up to measure $\varepsilon$.
This concludes the proof of Theorem~\ref{thm:Sigma01}.
\hfill$\qed$
\bigskip
\bigskip

\noindent{\bf Acknowledgements:}
The authors are grateful to Ted Slaman for valuable discussions.
The authors are members of LIA SINFIN, Universit\'e de Paris-CNRS/Universidad de Buenos Aires-CONICET.
Becher is supported by grant STIC-Amsud 20-STIC-06 and PICT-2018-02315.

\bigskip

\bibliography{koksma}

\begin{thebibliography}{10}

\bibitem{Avigad2013}
J.~Avigad.
\newblock Uniform distribution and algorithmic randomness.
\newblock {\em Journal of Symbolic Logic}, 78(1):334--344, 2013.

\bibitem{Ba17}
G.~Barmpalias, D.~Cenzer, and Ch. Porter.
\newblock Random numbers as probabilities of machine behavior.
\newblock {\em Theoretical Computer Science}, 673:1--18, 2017.

\bibitem{Barmpalias2017}
G.~Barmpalias and A.~Lewis-Pye.
\newblock Computing halting probabilities from other halting probabilities.
\newblock {\em Theoretical Computer Science}, 660:16--22, 2017.

\bibitem{BDC2001}
V.~Becher, S.~Daicz, and G.~Chaitin.
\newblock A highly random number.
\newblock In C.~Calude, M.~J. Dineen, and S.~Sburlan, editors, {\em
  Combinatorics, Computability and Logic}, pages 55--68. Springer Verlag,
  London, 2001.

\bibitem{BFGM2006}
V.~Becher, S.~Figueira, S.~Grigorieff, and J.~Miller.
\newblock Randomness and halting probabilities.
\newblock {\em Journal of Symbolic Logic}, 71(4):1411--1430, 2006.

\bibitem{BG2005}
V.~Becher and S.~Grigorieff.
\newblock Random reals and possibly infinite computations. part {I}:
  {R}andomness in the halting problem.
\newblock {\em Journal of Symbolic Logic}, 70(3):891--913, 2005.

\bibitem{BG2009}
V.~Becher and S.~Grigorieff.
\newblock From index sets to randomness in the $n$-th jump of the halting
  problem. {R}andom reals and possibly infinite computations, {P}art {II}.
\newblock {\em Journal of Symbolic Logic}, 74(1):124--156, 2009.

\bibitem{BDHMS2012}
L.~Bienvenu, A.~R. Day, M.~Hoyrup, I.~Mezhirov, and A.~Shen.
\newblock A constructive version of {B}irkhoff's ergodic theorem for
  {M}artin-l\"of random points.
\newblock {\em Information and Computation}, 210:21--30, 2012.

\bibitem{Borel1909}
{{\'E}}. Borel.
\newblock Les probabilit{\'e}s d{\'e}nombrables et leurs applications
  arithm{\'e}tiques.
\newblock {\em Supplemento Rendiconti del Circolo Matematico di Palermo},
  27:247--271, 1909.

\bibitem{Bugeaud2012}
Y.~Bugeaud.
\newblock {\em Distribution modulo one and {D}iophantine approximation}, volume
  193 of {\em Cambridge Tracts in Mathematics}.
\newblock Cambridge University Press, Cambridge, 2012.

\bibitem{chaitin1975}
G.~Chaitin.
\newblock A theory of program size formally identical to information theory.
\newblock {\em Journal ACM}, 22:329--340, 1975.

\bibitem{DowneyGriffiths2004}
R.~Downey and E.~Griffiths.
\newblock Schnorr randomness.
\newblock {\em Journal of Symbolic Logic}, 69(2):533--554, 2004.

\bibitem{DowneyHirschfeldt2010}
R.~Downey and D.~Hirschfeldt.
\newblock {\em Algorithmic randomness and complexity}.
\newblock Springer, New York, 2010.

\bibitem{DrmotaTichy1997}
M.~Drmota and R.~Tichy.
\newblock {\em Sequences, discrepancies and applications}.
\newblock Lecture Notes in Mathematics. 1651. Springer, Berlin, 1997.

\bibitem{Ekstrom-Schmeling}
J.~Ekstr{\"o}m and J~Schmeling.
\newblock A survey on the {F}ourier dimension.
\newblock In P.~Gurevich, J.~Hell, B.~Sandstede, and A.~Scheel, editors, {\em
  Patterns of Dynamics}, pages 67--87. Springer, 2017.

\bibitem{FGMN2012}
J.~Franklin, N.~Greenberg, J.~Miller, and Keng~Meng Ng.
\newblock {M}artin-{L}\"of random points satisfy {B}irkhoff's ergodic theorem
  for effectively closed sets.
\newblock {\em Proceedings of the American Mathematical Society},
  140(10):3623--3628, 2012.

\bibitem{fp2020}
J.~Franklin and Ch. Porter.
\newblock Key developments in algorithmic randomness.
\newblock In {\em Algorithmic Randomness: Progress and Prospects}, volume~50 of
  {\em Lecture notes in Logic}, pages 1--39. Cambridge University Press, 2020.

\bibitem{GHR11}
P.~G{\'a}cs, M.~Hoyrup, and C.~Rojas.
\newblock Randomness on computable probability spaces -- a dynamical point of
  view.
\newblock {\em Theory of Computing Systems}, 48:465--485, 2011.

\bibitem{Koksma1935}
J.F. Koksma.
\newblock Ein mengentheoretischer satz {\"u}ber die gleichverteilung modulo
  eins.
\newblock {\em Compositio Math}, 2:250--258, 1935.

\bibitem{KuipersNiederreiter2006}
L.~Kuipers and H.~Niederreiter.
\newblock {\em Uniform distribution of sequences}.
\newblock Dover, 2006.

\bibitem{MartinLof1966}
P.~Martin-L{{\"o}}f.
\newblock The definition of random sequences.
\newblock {\em Information and Control}, 9:602--619, 1966.

\bibitem{Nies2009}
Andr{{\'e}} Nies.
\newblock {\em Computability and randomness}, volume~51 of {\em Oxford Logic
  Guides}.
\newblock Oxford University Press, Oxford, 2009.

\bibitem{Schnorr1971}
C.P. Schnorr.
\newblock {\em Zuf{\"a}alligkeit und Wahrscheinlichkeit. Eine algorithmische
  Begr{\"u}ndung der Wahrscheinlichkeitstheorie}.
\newblock Springer-Verlag, Berlin, 1971.

\bibitem{Shiryaev}
A.~N. Shiryaev.
\newblock {\em Probability}.
\newblock Graduate Texts in Mathematics. Springer, second edition edition,
  2016.

\bibitem{Solovay1975}
R.~Solovay.
\newblock {\em Handwritten manuscript related to Chaitin's work}.
\newblock IBM Thomas J. Watson Research Center, Yorktown Heights, NY, 215
  pages, 1975.

\end{thebibliography}
\bigskip

\begin{minipage}[c]{\textwidth}
\small
\noindent
Ver\'onica Becher\\
 Departmento de  Computaci\'on,   Facultad de Ciencias Exactas y Naturales\\
 Universidad de Buenos Aires \& ICC CONICET\\
Pabell\'on I, Ciudad Universitaria, 1428 Buenos Aires, Argentina
\\  
{\tt vbecher@dc.uba.ar}
 \bigskip

\noindent
Serge Grigorieff\\
IRIF,  Universit\'e de Paris \& CNRS\\
Case 7014 -  75205 PARIS Cedex 13, France\\
{\tt seg@irif.fr}
\end{minipage}

\end{document}